\documentclass[11pt, a4paper]{amsproc}
\usepackage[T1]{fontenc}
\usepackage{lmodern}
\usepackage[a4paper,margin=25mm,footskip=10mm]{geometry}
\usepackage{amssymb,mathtools}
\usepackage{microtype}
\usepackage{xcolor}
\usepackage{enumitem}
\usepackage{needspace,etoolbox}

\usepackage{hyperref}
\usepackage{bookmark}

\numberwithin{equation}{section}
\newtheorem{theorem}{Theorem}[section]
\newtheorem{lemma}[theorem]{Lemma}
\newtheorem{proposition}[theorem]{Proposition}
\newtheorem{corollary}[theorem]{Corollary}
\theoremstyle{definition}
\newtheorem{definition}[theorem]{Definition}
\theoremstyle{remark}

\newtheorem{maincorrection}{Theorem}

\newtheorem{maintheorem}[maincorrection]{Theorem}
\newtheorem{maincorollary}[maincorrection]{Corollary}

\newcommand{\M}{\mathcal M}

\DeclareMathOperator{\Tr}{Tr}

\DeclareMathOperator{\Ad}{Ad}

\DeclareMathOperator{\rank}{rank}
\DeclareMathOperator{\supp}{supp}

\title{Spectral gap, internality of relative commutants, and non-hyperlinear groups}
\author{Vadim Alekseev}
\address{V.A., Institut f\"ur Geometrie, TU Dresden,
01062 Dresden, Germany}
\email{vadim.alekseev@tu-dresden.de}

\author{Jihao Liu}
\address{J.L., Department of Mathematics and Beijing International Center
for Mathematical Research, Peking University, No.~5 Yiheyuan Road,
Haidian District, Beijing 100871, China}
\email{liujihao@math.pku.edu.cn}

\author{Andreas Thom}
\address{A.T., Institut f\"ur Geometrie, TU Dresden,
01062 Dresden, Germany}
\email{andreas.thom@tu-dresden.de}
\date{29 September 2026}
\subjclass{20F65, 20F69, 46L10, 22D55}

\begin{document}
\begin{abstract}
We prove that the relative commutant of a finite unitary tuple in a
tracial matrix ultraproduct is internal whenever its conjugation operator
has a spectral gap off the commutant. This implies the existence of a non-hyperlinear group.

\end{abstract}
\maketitle\thispagestyle{plain}
\tableofcontents

\section{Introduction}
\sectionmark{Introduction}
\label{intro:section}

In 1976, Connes~\cite[p.~105]{Connes1976} raised the question
whether every factor of type $\mathrm{II}_1$ with separable predual
embeds into an ultrapower $R^\omega$ of the hyperfinite
$\mathrm{II}_1$ factor $R$. Equivalently, the Connes embedding
problem asks whether every tracial von Neumann algebra with
separable predual admits a trace-preserving embedding into
$R^\omega$, or whether every finite tuple in such an algebra
admits finite-dimensional matrix approximations to its joint
$*$-moments. Kirchberg~\cite{Kirchberg1993} established equivalent
formulations in terms of tensor products of $C^*$-algebras and
the QWEP conjecture; see also~\cite{Ozawa2004}.

R\u{a}dulescu~\cite{Radulescu2008} developed the connection with
finite-dimensional approximation of discrete groups and exhibited
a non-residually finite group whose von Neumann algebra embeds into
$R^\omega$. A countable discrete group $\Gamma$ is called
\emph{hyperlinear} if it embeds into the unitary group of a tracial
matrix ultraproduct. This is equivalent to the existence of a
trace-preserving embedding of its group von Neumann algebra
$L(\Gamma)$ into $R^\omega$; see~\cite[Proposition~7.1]{Ozawa2004}.
Every sofic group is hyperlinear, since permutation matrices
convert approximations in normalized Hamming distance into
approximations in normalized Hilbert--Schmidt norm.

The question whether every countable group is hyperlinear is
therefore the restriction of the Connes embedding problem to
group von Neumann algebras with their canonical traces. The
distinction is essential: the general problem concerns arbitrary
tracial von Neumann algebras, and a nonembeddable tracial
representation of a group need not imply that the group itself
is non-hyperlinear. In particular, disproving the general
embedding conjecture does not by itself produce a
non-hyperlinear group.

Ji, Natarajan, Vidick, Wright, and Yuen~\cite{MIPRE} gave a
negative answer to the Connes embedding problem through their
theorem $\mathrm{MIP}^*=\mathrm{RE}$ in quantum complexity theory.
As they explain in~\cite[Section~1.4]{MIPRE}, the existence of
non-hyperlinear groups remained unresolved by this result.
Subsequent developments extended these methods to other
approximation problems. Bowen, Chapman, Lubotzky, and
Vidick~\cite{AldousLyonsI}, together with the companion work of
Bowen, Chapman, and Vidick~\cite{AldousLyonsII}, disproved the
Aldous--Lyons conjecture by constructing invariant random
subgroups of free groups that are not co-sofic.
Building on their refinement of the nonlocal-game machinery,
Manzoor~\cite{Manzoor} constructed a non-co-hyperlinear invariant
random subgroup and a countable probability-measure-preserving
equivalence relation whose von Neumann algebra is not Connes
embeddable. Taller and Vidick~\cite{TallerVidick} proved
RE-hardness for approximation of the quantum value of linear
constraint system games. Their result does not establish the
perfect completeness needed for the implication to
non-hyperlinear groups discussed there. More recently,
Lin~\cite{LinMIPco} proved
$\mathrm{MIP}^{\mathrm{co}}=\mathrm{coRE}$ for the commuting
operator model, obtaining another negative solution of the
Connes embedding problem.

The approach developed here originates in the rigidity of
approximations of Kazhdan groups. Kun~\cite{KunExpanders} proved
that sofic approximations of such groups can be modified on a
negligible set of edges into disjoint unions of uniformly
expanding graphs. Kun and Thom~\cite{KunThomActions} established
rigidity and self-improvement properties of their almost
automorphisms. These results were used in OpenAI's construction
of a non-sofic group~\cite[Chapter~3]{OpenAINonsofic}, where an
expander-matching argument detects an obstruction arising from
conjugating a Kazhdan subgroup properly into itself.
Kun and Thom~\cite{KunThomNonsofic} subsequently developed this
mechanism into constructions of non-sofic generalized wreath
products and amalgamated doubles.

The first and third authors~\cite{AlekseevThomCentralizers}
proved an internality theorem for centralizers of sofic
approximations of Kazhdan groups and asked for an analogous
statement in tracial matrix ultraproducts. The third
author~\cite{ThomConditional} formulated the corresponding
centralizer problem and showed that a positive answer would
yield non-hyperlinear amalgamated doubles.

To state the problem, let
\[
 \M=\prod_\omega(M_{d_n}(\mathbb C),\tau)
\]
be a tracial matrix ultraproduct, where each matrix algebra
carries its normalized trace. A von Neumann subalgebra of $\M$
is called \emph{internal} if it has the form $[A_n]_\omega$
for unital $*$-subalgebras $A_n\subset M_{d_n}(\mathbb C)$.
The centralizer problem asks whether
$
 \pi(\Gamma)'\cap\M
$
is internal for every Kazhdan group $\Gamma$ and every
homomorphism $\pi:\Gamma\to U(\M)$.
The formulation in~\cite[Open Problem~6.2(a)]{AlekseevThomCentralizers}
allows an asymptotically negligible change of the matrix
dimensions; the equivalent formulation
in~\cite{ThomConditional} uses the original dimensions.

Our main result gives a positive answer to this problem.
In fact, we prove a somewhat stronger statement. Our standing assumption is that we are given a unitary tuple $(u_1,\dots,u_h)$ in $\M$ which has \emph{spectral gap} $\kappa\in(0,1)$, meaning that
\begin{equation}\label{intro:main-gap}
 \frac1{4h}\sum_{j=1}^h\|[u_j,x]\|_2^2
 \ge \kappa\|x-E_C(x)\|_2^2
 \qquad(x\in\M),
\end{equation}
where $E_C$ is the trace-preserving conditional expectation
onto the relative commutant $C = \{u_1,\ldots,u_h\}'\cap\M$.

Our first result is the following correction theorem. It should be regarded as an analogue of the decomposition of a sofic approximation into expanders under a spectral gap assumption \cite[Theorem~1]{KunExpanders} and \cite[Theorem~C]{AlekseevDrigalla}. In the matrix setting, we obtain a decomposition into quantum expanders, see \cite{HastingsExpanders,PisierExpanders,LiQiaoWigderson} for background.

\begin{maincorrection}[Theorem~\ref{phy:decomposition}]
\label{main:correction}
 Let $u_1,\ldots,u_h\in U(\M)$ be a unitary tuple with spectral gap in the ultraproduct. There exist lifts $(u_{j,n}^{\pm})$ of $u_j$ to $U(d_n)$ and invariant projection partitions $(e_{0,n},\ldots,e_{m_n,n})$ such that the compressions of the doubled tuple $(u^{\pm}_{j,n})_{j,\pm}$ to each nonzero block $e_{i,n}M_{d_n}e_{i,n}$ have scalar spectral gap $\kappa^2/2^{28}$.
\end{maincorrection}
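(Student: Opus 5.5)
We would follow the strategy of Kun's expander decomposition for sofic approximations, transported to the matrix setting.

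\emph{Step one: the doubled tuple and the finite-level operator.} Fix arbitrary unitary lifts $u_{j,n}\in U(d_n)$. Consider the completely positive, trace-preserving, self-adjoint map
\[
\Phi_n(x)=\frac1{2h}\sum_{j,\pm}u^{\pm}_{j,n}\,x\,(u^{\pm}_{j,n})^*,
\]
where $u^{+}=u$ and $u^-=u^*$. Doubling makes $\Phi_n$ self-adjoint on $L^2(M_{d_n},\tau)$. It also gives $\langle (1-\Phi_n)x,x\rangle=\frac1{4h}\sum_j\|[u_{j,n},x]\|_2^2$. The gap hypothesis \eqref{intro:main-gap} passes to the ultraproduct operator $\Phi=\prod_\omega\Phi_n$. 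The first key lemma will then be a \emph{finite-level spectral dichotomy}: for $\omega$-almost every $n$, the spectral projection $P_n=\chi_{[1-\delta_n,1]}(\Phi_n)$ with $\delta_n\to0$ slowly is nearly contained in the eigenspace near $1$. Eigenvalues of $\Phi_n$ in $(1-\kappa/2,1-\delta_n)$ carry asymptotically negligible spectral mass on every bounded sequence; otherwise one builds an element of $\M$ violating \eqref{intro:main-gap}. So $\Phi_n$ has approximately a gap $\kappa/2$ in $L^2$ outside a subspace $V_n$ of almost-invariant elements.

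\emph{Step two: from almost-invariant elements to almost-invariant projections.} The subspace $V_n$ is an approximate operator system whose ultralimit is $C$. Using functional calculus and a spectral-cutting (co-area type) argument, as in Kun's proof, we extract projections $p$ with $\sum_j\|[u_{j,n},p]\|_2^2\le\varepsilon\|p\|_2^2$ from self-adjoint elements of $V_n$. We then iterate a greedy decomposition: starting from a projection $e$ whose compression fails to have scalar gap $\kappa^2/2^{28}$, the failing almost-invariant vector in $e M_{d_n} e$ gives, by the cutting lemma, a subprojection $p\le e$ that is almost invariant relative to $e$, with $\tau(p)$ and $\tau(e-p)$ both non-negligible. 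Splitting along it and applying a Cheeger-type inequality for the conjugation operator (the loss from $\kappa$ to roughly $\kappa^2$ comes from this Cheeger step, with constants accumulating to $2^{-28}$) bounds the total boundary $\sum_i\|[u_{j,n},e_i]\|_2^2$ by a controlled multiple of the spectral gap deficit. The process terminates because the global gap of Step one forbids refining into pieces whose boundaries sum to more than $o(1)$. The leftover garbage goes into $e_{0,n}$, which has negligible trace or is absorbed.

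\emph{Step three: correction to exact invariance.} The partition is only almost invariant: $\sum_i\|[u^{\pm}_{j,n},e_{i,n}]\|_2\to0$. We replace $u_{j,n}$ by $\sum_i e_{i,n}u_{j,n}e_{i,n}$, fixed up to a unitary by polar decomposition on each block. This is a new lift of the same $u_j$, and it commutes with every $e_{i,n}$. Compressions of the old tuple to blocks differ from the new tuple by $o(1)$ in $\|\cdot\|_2$, so the block gaps survive with halved constant, giving $\kappa^2/2^{28}$.

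The main obstacle is Step two. A nonzero element of $eM_{d_n}e$ with small commutators must yield an almost-invariant \emph{projection}, not merely a self-adjoint element. It must also have controlled boundary, uniformly across blocks of wildly varying size, with no dimension-dependent losses. We would first try the level-set argument applied to $|x-\tau_e(x)|^2$ for the optimal eigenvector, using the noncommutative co-area inequality $\int_0^\infty\|[u,\chi_{(t,\infty)}(a)]\|_2^2\,dt\lesssim\|[u,a]\|_2\|a\|_2$ valid for conjugation by unitaries.
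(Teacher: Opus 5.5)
There are genuine gaps at both decisive points of your outline. \emph{First, Step three fails as stated.} Scalar gap on a block $e$ concerns every $x\in eM_{d_n}e$, including rank-one projections, whose normalized $2$-norm is $d_n^{-1/2}$. A perturbation of the unitaries that is $o(1)$ in normalized $\|\cdot\|_2$ gives no control of $\|[v_j,x]\|_2/\|x\|_2$ for such $x$. For example, changing a gapped tuple on a two-dimensional subspace so that every $v_j$ fixes a unit vector $\xi$ costs $O(d_n^{-1/2})$ in $\|\cdot\|_2$, yet kills the gap, because $|\xi\rangle\langle\xi|$ becomes invariant. The polar completion of $eu_{j,n}e$ differs from it only where $|eu_{j,n}e|$ is far from $e$. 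That part has vanishing trace but arbitrary low-rank structure, and it is exactly where this can happen. Moreover, a globally $o(1)$ error may be concentrated on blocks of small trace, where it is of order one relative to $\tau(e)$. So the gap has to be proved for the corrected tuple directly.

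The paper does this in Lemma~\ref{phy:pruning}. Blocks with $\xi_i>\kappa'\tau(P_i)/16$ are discarded. The others are pruned of every half-rank subprojection that is non-expanding for the \emph{compressed} tuple $ew_je$. Then $b=ew_je$ is replaced by the two unitaries $v(|b|\pm i(e-|b|^2)^{1/2})$, whose average is exactly $b$. By convexity their energy dominates that of $b$, and the quantum Cheeger inequality applies. This is what the doubled tuple means: $u^+_{j,n}$ and $u^-_{j,n}$ are both lifts of $u_j$. With your reading $u^-=u^*$ the doubling is vacuous, since $\|[v^*,x]\|_2=\|[v,x]\|_2$ for unitary $v$.

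\emph{Second, Step two has no mechanism that makes the boundary negligible.} The ultraproduct gap reaches the coordinates only in the $\infty\to2$ sense of Proposition~\ref{pre:expectation}, which is your ``every bounded sequence''. It gives no $L^2$ gap for $\Phi_n$, so $V_n$ says nothing about low-rank elements. A Cheeger cut from a block failing gap $c$ has boundary ratio of order $\sqrt c$, a fixed constant. Recursive splitting therefore creates boundary proportional to the traces cut off, and the same part of the space may lie on the small side of many successive cuts. But the corrected unitaries are lifts only if the total boundary tends to $0$. Your claim that ``the global gap forbids refining into pieces whose boundaries sum to more than $o(1)$'' is not an argument.

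The missing ingredient is the uniform improvement Theorem~\ref{pre:repair}. One removes a corner $1-r_n$ of vanishing trace, formed by maximal orthogonal families of bad projections. Their total trace is controlled because their random-sign sums are contractions, and this is how the size-uniformity you flag is obtained. After that, every $p\le r_n$ with $\mathcal E_n(p)\le\kappa\tau(p)/64$ is close to some $f$ with $\mathcal E_n(f)\le\alpha_n\tau(p)$, where $\alpha_n\to_\omega0$. However, $f$ is produced by the global map $T_n^{\ell_n}$ and is neither below $e$ nor orthogonal to earlier pieces, so it cannot drive a top-down splitting.

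The paper instead builds the family bottom-up. It selects minimal-rank admissible $p_i\le r_n$, transported from the low spectral projection of $S_{i-1}$, with termination and trace bounds coming from the potential $\psi$. It then orthogonalizes the family via Lemma~\ref{phy:covariance} and Proposition~\ref{pov:rounding}, keeping energy and coverage under control. Finally, the minimality of $p_i$ yields the almost-expansion inequality~\eqref{phy:almost-expansion} that feeds the pruning lemma. None of this is present in your outline, and your closing remark identifies the difficulty without resolving it.
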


A crucial consequence of this spectral gap lifting theorem is that the whole commutant is in fact internal.

\begin{maintheorem}
\label{main:internality}
Let $u_1,\ldots,u_h\in U(\M)$ be a unitary tuple with spectral gap in the ultraproduct. Then the relative commutant $C = \{u_1,\ldots,u_h\}'\cap\M$ is internal: $C = [A_n]_\omega$ for some subalgebras $A_n\subset M_{d_n}$.
\end{maintheorem}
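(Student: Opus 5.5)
We derive Theorem~\ref{main:internality} from Theorem~\ref{main:correction}. Fix the lifts $u_{j,n}^{\pm}$ and the invariant projection partitions $(e_{0,n},\dots,e_{m_n,n})$ provided there. The natural candidate is
\[
 A_n=\{u_{j,n}^{\pm}: j\le h,\ \pm\}'\cap \Bigl(\bigoplus_{i} e_{i,n}M_{d_n}e_{i,n}\Bigr),
\]
that is, the commutant of the doubled tuple inside the block-diagonal algebra. We then prove the two inclusions $[A_n]_\omega\subset C$ and $C\subset[A_n]_\omega$.

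\textbf{First inclusion.} The lifts $u_{j,n}^{\pm}$ represent $u_j$ (and $u_j^*$, depending on how the doubled tuple is set up). Any element of $A_n$ commutes exactly with all of them. So a bounded sequence $(a_n)$ with $a_n\in A_n$ defines an element of $\M$ commuting with each $u_j$. Hence $[A_n]_\omega\subset C$. Here we use that $[A_n]_\omega$ means the ultraproduct of uniformly bounded sequences, which is a von Neumann subalgebra of $\M$.

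\textbf{Second inclusion.} Let $x\in C$ with $\|x\|\le1$, and choose a bounded lift $x_n$.

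\emph{Step 1: $x$ is block-diagonal.} Since $x$ commutes with $u_j$, we have $\|[u_{j,n}^{\pm},x_n]\|_2\to0$ along $\omega$. The projections $e_{i,n}$ commute exactly with the lifts, so the compressions $e_{i,n}x_ne_{k,n}$ also asymptotically commute with the lifts, uniformly summed over blocks. For the off-diagonal pieces $i\neq k$ we need a gap for the bimodule action on $e_iM e_k$. The scalar spectral gap of each block gives that the only almost-invariant vectors of $e_iMe_i$ are scalars. I would then argue via the standard trick: apply the gap to the block $(e_i+e_k)M(e_i+e_k)$ acting on the $2\times2$ corner, or use that $y\mapsto y^*y$ and $y\mapsto yy^*$ send almost-intertwiners to almost-invariant elements. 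Together with the scalar gap, this forces an almost-intertwiner $y\in e_iMe_k$ to be close to a multiple of a genuine intertwiner. Thus the off-diagonal part is either small or close to an exact intertwiner. I will include exact intertwiners between blocks in $A_n$: rather than the block-diagonal algebra, take the full commutant $\{u_{j,n}^{\pm}\}'\cap M_{d_n}$, which contains the $e_{i,n}$.

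\emph{Step 2: apply the gap and sum over blocks.} Redefine $A_n$ as the exact commutant $\{u^{\pm}_{j,n}\}'\cap M_{d_n}$. The gap statement we need is
\[
 \tfrac{1}{4h'}\sum\|[u^{\pm}_{j,n},y]\|_2^2\ \ge\ c\,\|y-E_{A_n}(y)\|_2^2\qquad (y\in M_{d_n}),
\]
with $c$ depending only on $\kappa$. Given this, $\|x_n-E_{A_n}(x_n)\|_2\to0$, so $x=[E_{A_n}(x_n)]_\omega\in[A_n]_\omega$.

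To prove the inequality, decompose $y=\sum_{i,k}e_iye_k$; the pieces are orthogonal and the commutator map respects this decomposition. On the diagonal blocks, the scalar gap $\kappa^2/2^{28}$ gives exactly the inequality with $E_{A_n}$ restricted to $e_iMe_i$, i.e.\ onto $\mathbb C e_i$. Off the diagonal, the representation on $e_iMe_k\cong e_iH\otimes\overline{e_kH}$ is a tensor product of two representations, each with a gap on its own block. A standard argument (Hilbert–Schmidt operators $T$ with $\|uT-Tu'\|$ small yield $|T|$ almost invariant) gives a gap for this representation relative to its invariant vectors, which are the exact intertwiners. The resulting constant is polynomial in the block gap.

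\textbf{Main obstacle.} The crux is this off-diagonal gap, which must hold uniformly in the block sizes and in the number of blocks. Blocks may also be tiny, and $e_{0,n}$ may be a special, possibly degenerate, block. I would handle the polar-decomposition step by applying the scalar gap to $|T|^2\in e_kMe_k$ and $|T^*|^2\in e_iMe_i$. These are almost invariant with error $\lesssim\|T\|\cdot\text{defect}$, so after normalizing they are near scalars. The partial isometry part is then near a multiple of a unitary intertwiner, using the Powers–Størmer inequality.

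If $e_{0,n}$ is a garbage block of vanishing trace, it is simply discarded, since its contribution vanishes in $\M$.
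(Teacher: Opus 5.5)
Your first inclusion $[A_n]_\omega\subset C$ is correct. The second inclusion breaks at exactly the step you flag as the crux. The asserted uniform off-diagonal gap relative to \emph{exact} intertwiners is false, and so is the dichotomy in your Step~1 (``small or close to an exact intertwiner''). As a result, the exact commutant of the lifts can be strictly smaller than $C$ in the ultraproduct.

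Scalar gap on a block makes its representation irreducible. It does not force two blocks that carry approximately equivalent representations to be exactly equivalent, or even to have equal rank. Here is a concrete example. Let $\pi_n:\mathbb F_h\to U(m_n)$ ($h\ge2$) be quantum expanders whose conjugation action has scalar gap $\kappa$. Choose $\epsilon_n\to0$ with $e^{im_n\epsilon_n}\ne1$, and put
\[
 u_{1,n}=\pi_n(g_1)\oplus e^{i\epsilon_n}\pi_n(g_1),\qquad
 u_{j,n}=\pi_n(g_j)\oplus\pi_n(g_j)\quad(j\ge2).
\]
In $\M$ these define the same elements as $1_2\otimes\pi_n(g_j)$. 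Hence the ultraproduct gap holds and $C=M_2(\mathbb C)\otimes1$. The two-block partition $e_{1,n}=1\oplus0$, $e_{2,n}=0\oplus1$, together with $u^{\pm}_{j,n}=u_{j,n}$, satisfies every conclusion of Theorem~\ref{main:correction}, since the phase does not change $\Ad u_{1,n}$ on either block. Nothing in your argument excludes these lifts.

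The two block representations, however, are irreducible and inequivalent: their determinants at $g_1$ differ by $e^{im_n\epsilon_n}$. So $\{u^{\pm}_{j,n}\}'\cap M_{d_n}=\mathbb Ce_{1,n}\oplus\mathbb Ce_{2,n}$. The flip $\left(\begin{smallmatrix}0&1\\1&0\end{smallmatrix}\right)\otimes1\in C$ is at $2$-norm distance $1$ from $[A_n]_\omega\cong\mathbb C^2$. Your Powers--St\o{}rmer step does correctly give $T\approx\lambda U$ with $U$ a near-unitary almost-intertwiner (here, the identity between the blocks). The last step, ``$U$ is near a multiple of a unitary intertwiner'', has no basis: there is no nonzero exact intertwiner at all. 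The same failure occurs for blocks of ranks $m$ and $m+1$ that are almost intertwined.

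So $A_n$ cannot be the exact commutant of lifts supplied by Theorem~\ref{main:correction}. It must instead be generated by \emph{approximate} intertwiners promoted to exact matrix units, which no longer commute with the lifts. This is the content of Theorem~\ref{cp:rounding}, and its ingredients are what your proposal is missing:
\begin{enumerate}
\item Start from the $D_n$-bimodular u.c.p.\ lift $\Psi_n=Q_n^{N_n}$ of $E_C$ given by Corollary~\ref{phy:lifts}.
\item Use $\|\Psi_n^2-\Psi_n\|_{\infty\to2}\to_\omega0$ and a maximal-matching argument. After discarding a set of blocks of negligible trace, no rectangular restriction $F_{ij}$ has eigenvalues in $(\rho,1-\rho)$. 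This uniformly disposes of ``intermediate'' pairs, which your threshold-free approach never addresses.
\item Show that each remaining rectangle has at most one near-fixed direction, and that having one is an equivalence relation. This also forces the ranks within a class to agree up to a factor $1-O(\rho)$.
\item Build matrix units $W_{ij}=U_iU_j^*$ from polar parts of near-fixed eigenvectors, with isometries $U_i$ from a least-rank block of each class. Add the small leftover corner $1-q_n$ as a full matrix algebra.
\end{enumerate}
The inclusion $[A_n]_\omega\subset C$ then follows from $\|\Psi_n-E_{A_n}\|_{\infty\to2}\to_\omega0$ and Proposition~\ref{pre:expectation}, not from exact commutation.
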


For a homomorphism of a Kazhdan group into $U(\M)$,
property~$(T)$ supplies spectral gap for a finite
generating tuple \cite{BHV}. Theorem~\ref{main:internality} therefore answers the centralizer
problem affirmatively. For a subgroup $H<G$, define the compression semigroup
\[
 P_H=\{t\in G:tHt^{-1}\le H\},
\]
and call $H$ \emph{infranormal} in $G$ if $P_H$ generates $G$.
Combining Theorem~\ref{main:internality} with
\cite[Theorem~1.3]{ThomConditional} gives the following consequence.

\begin{maincorollary}
\label{main:nonhyperlinear}
Let $H<G$ be infranormal but not normal, and suppose that both
$H$ and $G$ are Kazhdan. Then the amalgamated double $G*_H G$ is
not hyperlinear.
\end{maincorollary}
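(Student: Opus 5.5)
The corollary follows by combining Theorem~\ref{main:internality} with the reduction in \cite[Theorem~1.3]{ThomConditional}. That reduction says that if the centralizer $\pi(\Gamma)'\cap\M$ is internal for every Kazhdan group $\Gamma$ and every homomorphism $\pi\colon\Gamma\to U(\M)$ into a tracial matrix ultraproduct, then $G*_H G$ is not hyperlinear under the stated hypotheses. So the only task is to verify this internality hypothesis, in exactly the form (original dimensions $d_n$) used in \cite{ThomConditional}.

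\textbf{Step 1: spectral gap from property (T).} Fix a Kazhdan group $\Gamma$ with finite generating set $s_1,\dots,s_h$, and put $u_j=\pi(s_j)\in U(\M)$. Consider the unitary representation of $\Gamma$ on $L^2(\M)$ by conjugation, $x\mapsto \pi(g)x\pi(g)^*$. Its invariant vectors are exactly the elements of $L^2$ of the relative commutant $C=\{u_1,\dots,u_h\}'\cap\M=\pi(\Gamma)'\cap\M$. Here one uses that the invariant subspace is the closure of $C$, since the unitaries preserve the trace and $E_C$ is the corresponding projection.

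Property (T) \cite{BHV} supplies a Kazhdan constant $\epsilon>0$ for $\{s_j\}$: every vector $\xi$ orthogonal to the invariant vectors satisfies $\max_j\|s_j\xi-\xi\|\ge\epsilon\|\xi\|$. Apply this to $\xi=x-E_C(x)$. Since $[u_j,x]=(u_jxu_j^*-x)u_j$, we have $\|[u_j,x]\|_2=\|u_jxu_j^*-x\|_2$. Moreover $[u_j,E_C(x)]=0$, so $\|[u_j,x]\|_2=\|[u_j,x-E_C(x)]\|_2$. Bounding the maximum by the sum gives \eqref{intro:main-gap} with $\kappa=\min(\epsilon^2/(4h),1/2)$. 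This constant depends only on $\Gamma$ and the generators, not on $\pi$.

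\textbf{Step 2: internality.} Theorem~\ref{main:internality} now gives $C=[A_n]_\omega$ with $A_n\subset M_{d_n}$ in the original dimensions. This is precisely the positive answer to the centralizer problem in the formulation of \cite{ThomConditional}.

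\textbf{Step 3: apply the reduction.} Invoke \cite[Theorem~1.3]{ThomConditional} with $H<G$ infranormal, not normal, and both Kazhdan. Its internality hypothesis is needed for $\pi$ restricted to $H$ and to $G$, and Steps 1 and 2 supply it for any Kazhdan group. The conclusion is that $G*_H G$ is not hyperlinear.

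The main point to check is that the hypothesis in \cite{ThomConditional} is exactly internality of relative commutants of Kazhdan images in every tracial matrix ultraproduct. In particular, no quantitative uniformity across different $\pi$ should be required beyond what Theorem~\ref{main:internality} gives, and the ultrafilter should be arbitrary. If that reference requires uniform structure, Theorem~\ref{main:correction} supplies block-wise gap $\kappa^2/2^{28}$ with $\kappa$ depending only on the Kazhdan constant, which should meet any such requirement.
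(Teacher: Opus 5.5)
Your proposal is correct and follows the paper's argument. Property~(T) gives the spectral gap \eqref{intro:main-gap} for a finite generating tuple; the paper only cites \cite{BHV} here, while you derive $\kappa=\min(\epsilon^2/(4h),1/2)$ from a Kazhdan constant. Theorem~\ref{main:internality} then answers the centralizer problem in the original dimensions, and \cite[Theorem~1.3]{ThomConditional} yields that $G*_H G$ is not hyperlinear.

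Your closing hedge about uniform structure is unnecessary. The paper applies that reduction exactly in the form of a positive answer to the centralizer problem.
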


The constructions in~\cite{KunThomNonsofic} supply such pairs
with both groups residually finite. An explicit example is
recorded in the final section.

The normalization result in~\cite[Theorem~1.2]{ThomConditional}
also gives a group-theoretic consequence.

\begin{maincorollary}
\label{main:centralizer-normal}
Let $H<G$ be infranormal, and suppose that both $H$ and $G$ are
Kazhdan. If $G$ is hyperlinear, then the centralizer $C_G(H)$ is
normal in $G$.
\end{maincorollary}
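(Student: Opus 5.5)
This corollary follows by combining Theorem~\ref{main:internality} with the normalization result \cite[Theorem~1.2]{ThomConditional}, exactly parallel to how Corollary~\ref{main:nonhyperlinear} follows from \cite[Theorem~1.3]{ThomConditional}. The plan is as follows. Suppose $G$ is hyperlinear, and fix an embedding $\pi\colon G\to U(\M)$ into a tracial matrix ultraproduct, with $\pi(g)$ trace-orthogonal to $1$ for $g\neq e$ (i.e.\ $\tau(\pi(g))=0$). We may assume this by the equivalence with $L(G)\hookrightarrow R^\omega$ recalled in the introduction. The first step is to verify the hypothesis of \cite[Theorem~1.2]{ThomConditional}, namely that centralizers of images of Kazhdan groups in $\M$ are internal. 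Since $H$ is Kazhdan, it is finitely generated. Choose a finite generating set $s_1,\dots,s_h$ of $H$ and set $u_j=\pi(s_j)$. Property~(T) gives a Kazhdan constant $\varepsilon>0$ for $(s_j)$, and applying it to the conjugation representation of $H$ on $L^2(\M)\ominus L^2(C)$ (which has no invariant vectors, since the invariant vectors are exactly $L^2$ of the relative commutant) yields the inequality \eqref{intro:main-gap} with $\kappa$ depending only on $\varepsilon$ and $h$ \cite{BHV}. Theorem~\ref{main:internality} then shows that $\pi(H)'\cap\M=\{u_1,\dots,u_h\}'\cap\M$ is internal. The same argument applies to $G$, and to any conjugate $\pi(tHt^{-1})$, if that is what the cited theorem needs.

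The second step is to feed this into \cite[Theorem~1.2]{ThomConditional}. In the form I expect, it says: if $H<G$ is infranormal, both groups are Kazhdan, and the relevant relative commutants in a matrix ultraproduct are internal for an embedding $\pi$ of $G$ of the above kind, then $C_G(H)$ is normal in $G$. The mechanism is presumably this. For $t\in P_H$, conjugation by $\pi(t)$ maps $\pi(H)'\cap\M$ into $\pi(tHt^{-1})'\cap\M$, which contains $\pi(H)'\cap\M$. Internality turns this into a comparison of finite-dimensional algebras $A_n$ of controlled type, so the proper inclusion forced by conjugation cannot occur, and hence $\pi(t)$ normalizes $\pi(H)'\cap\M$. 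Since $P_H$ generates $G$, all of $\pi(G)$ normalizes $\pi(H)'\cap\M$. Intersecting with $\pi(G)$ and using injectivity of $\pi$ shows that $G$ normalizes $C_G(H)=\pi^{-1}(\pi(H)'\cap\M)$.

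The main obstacle is matching hypotheses precisely: checking that the version of internality required in \cite{ThomConditional} is the one with the original dimensions $d_n$, which the introduction says is the formulation used there and which Theorem~\ref{main:internality} provides. One must also make sure the embedding $\pi$ is of the form that theorem allows, for instance that it is $\tau$-preserving on $L(G)$. If \cite{ThomConditional} requires internality for every homomorphism of a Kazhdan group, that is exactly the positive answer to the centralizer problem obtained above, so no further work should be needed beyond citing.
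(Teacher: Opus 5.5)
Your proposal is correct and follows the paper's argument. Theorem~\ref{main:internality}, with spectral gap supplied by property~(T), answers the centralizer problem, so \cite[Theorem~1.2]{ThomConditional} shows that $\pi(G)$ normalizes $\pi(H)'\cap\M$; injectivity of $\pi$ then gives $\pi(C_G(H))=\pi(G)\cap(\pi(H)'\cap\M)$, which is normal in $\pi(G)$. The trace normalization $\tau(\pi(g))=0$ and your sketch of the mechanism behind the cited theorem are not needed: the paper works with an arbitrary embedding and simply cites that result.
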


The proofs presented here are based
on~\cite{LiuIdeas}. This paper is organized as follows.
Section~\ref{pre:section} establishes the spectral gap
estimates, coordinate lifts of conditional expectations,
and uniform projection improvement.
Sections~\ref{pov:section} and~\ref{phy:constructionsection}
constitute the main technical part of the paper and prove
Theorem~\ref{main:correction}. 
%Its conclusion is parallel to Kun's
% decomposition of sofic approximations of property~$(T)$ groups into
% expanders~\cite[Theorem~1]{KunExpanders}, later refined by the first
% author and Drigalla so that only an ultraproduct spectral gap is
% assumed~\cite[Theorem~C]{AlekseevDrigalla}.

The proof follows the strategy of Kun's work. Spectral gap allows one
to correct a projection with small boundary energy to one with
negligible boundary energy. Kun's argument uses an inductive
construction-correction procedure to exhaust the graph and obtain an
expander decomposition. In the matricial setting, the resulting family
of almost-invariant projections need not be almost orthogonal.
Section~\ref{pov:section} develops an orthogonalization procedure that
preserves almost invariance and exhaustivity, making it possible to
complete the proof of Theorem~\ref{main:correction}.

In Section~\ref{cp:section}, Theorem~\ref{main:correction} first yields
an internal maximal abelian subalgebra in the relative commutant. We then reconstruct the full
relative commutant by detecting which atoms in its internal model must
be connected by matrix units. Finally, the last section records the
consequences for hyperlinear approximations of Kazhdan groups and
applies the conditional construction of~\cite{ThomConditional} to show
that the relevant group doubles from~\cite{KunThomNonsofic} are not
hyperlinear.

\subsection*{AI assistance}
AI tools contributed to several stages of the work leading to this paper.
The preceding papers~\cite{AlekseevThomCentralizers,KunThomNonsofic,ThomConditional}
already acknowledged AI assistance in drafting, editing, and checking their
manuscripts; the latter two also explicitly acknowledged its assistance in writing and simplifying the proofs.

As already explained in~\cite[Appendix~A]{LiuIdeas}, the proof in that
manuscript was obtained on 19 September 2026 using Danus~\cite{Danus}, an automated
mathematical reasoning system built on Rethlas~\cite{Rethlas} and running GPT-6 Astra.
The system carried out proof search, manuscript preparation, and automated
mathematical reviews. The recorded human instructions specified the
problem but supplied no mathematical strategy. The disclosure distinguishes
these automated reviews from formal proof certification and external
human verification.

The present exposition was developed through sustained interaction
between the first and third authors and ChatGPT/Codex, starting
from~\cite{LiuIdeas} and their earlier investigations of the centralizer problem.
AI assistance included reconstructing and simplifying proofs, exploring
alternative arguments, checking mathematical details and references,
and drafting and revising the text. The first and third authors directed this revision;
the authors take responsibility for the mathematical claims, references,
and final presentation.

\section{Spectral gap and improvement of projections}
\sectionmark{Spectral gap and improvement of projections}
\label{pre:section}

\subsection{Tracial ultraproducts and spectral gap}
Throughout, $\tau$ denotes the normalized trace of the finite von Neumann algebra (often a matrix algebra)
under consideration and $\|x\|_2=\tau(x^*x)^{1/2}$, while \(\Tr\) and \(\|x\|_{\rm{HS}} = \Tr(x^*x)^{1/2} \) are reserved for the unnormalized trace and Hilbert--Schmidt norm on matrices. The same norm definitions are used for maps between finite-dimensional Hilbert spaces.

Fix a nonprincipal ultrafilter
$\omega$ on $\mathbb N$ and positive integers $d_n$. The tracial
ultraproduct is
\[
 \M=\ell^\infty(M_{d_n})/
       \{(x_n):\lim_\omega\|x_n\|_2=0\}.
\]
We use its faithful normal trace $\tau((x_n)_\omega)=
\lim_\omega\tau(x_n)$ and its trace Hilbert space $L^2(\M)$.
For unital $*$-subalgebras $A_n\subset M_{d_n}$, write
$[A_n]_\omega$ for their image in $\M$. Such an algebra is called
\emph{internal}. For a von Neumann subalgebra $A\subset\M$, $E_A$
denotes its trace-preserving conditional expectation. On $L^2$ it
is the orthogonal projection onto $L^2(A)$.

We use standard facts about finite von Neumann algebras:
the spectral and polar decomposition theorems, trace-preserving
conditional expectations, and the finite tracial ultraproduct construction.
Finite-dimensional complete positivity, the Kraus representation,
and Schwarz's inequality are also used. We occasionally call an orthogonal family of projections summing to $1$ a projection partition.

Fix $h\ge1$ and unitary representatives $u_{1,n},\ldots,u_{h,n}$.
Put $u_j=(u_{j,n})_\omega$ and
\begin{equation}\label{pre:lazy-markov}
 C=\{u_1,\ldots,u_h\}'\cap\M,
 \qquad
 T_n(x)=\frac12x+\frac1{4h}\sum_{j=1}^h
              (u_{j,n}xu_{j,n}^*+u_{j,n}^*xu_{j,n}).
\end{equation}
The induced operator on $L^2(\M)$ is denoted by $T$. We will call these operators lazy Markov operators associated with the corresponding unitary tuples. Each $T_n$
is unital, completely positive (u.c.p.) and trace-preserving, self-adjoint on $L^2$, and satisfies $0\le T_n\le I$
as a Hilbert-space operator. Define the boundary energy of $x$ by
\begin{equation}\label{pre:energy}
 \mathcal E_n(x)=\frac1{4h}\sum_j\|[u_{j,n},x]\|_2^2
                  =\langle(I-T_n)x,x\rangle.
\end{equation}
The identity follows by expanding the squared commutators. 

Our main assumption is \emph{spectral gap in the ultraproduct}: for the same $\kappa\in(0,1)$,
\begin{equation}\label{pre:gap}
 \langle(I-T)\xi,\xi\rangle\ge\kappa\|\xi\|_2^2
 \qquad(\xi\perp L^2(C)).
\end{equation}

The fixed space of $T$ is exactly $L^2(C)$. We obtain
\begin{equation}\label{pre:spectralbounds}
 \|T^j-E_C\|_{2\to2}\le (1-\kappa)^j,
 \qquad \|T^k-T^j\|_{2\to2}\le (1-\kappa)^j\quad(1\le j\le k).
\end{equation}

\Needspace{8\baselineskip}
\subsection{Uniform transfer and coordinate lifts}

\begin{lemma}\label{pre:transfer}
Suppose linear maps $F_n:M_{d_n}\to M_{d_n}$ are uniformly bounded
in both $\infty\to\infty$ and $2\to2$ norms, and induce $F_\omega$
on $\M$. Then
\[
 \|F_\omega\|_{\infty\to2}
       =\lim_\omega\|F_n\|_{\infty\to2}.
\]
\end{lemma}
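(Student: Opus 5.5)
We prove the two inequalities separately. Throughout, \(\|F\|_{\infty\to2}\) denotes \(\sup\{\|F(x)\|_2:\|x\|_\infty\le1\}\). Let \(K\) be a common bound for \(\|F_n\|_{\infty\to\infty}\) and \(\|F_n\|_{2\to2}\).

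First note that \(F_\omega\) is well defined. Since \(\|F_n(x_n)\|_\infty\le K\|x_n\|_\infty\), the map sends \(\ell^\infty(M_{d_n})\) to itself. Since \(\|F_n(x_n)\|_2\le K\|x_n\|_2\), it preserves the ideal of \(\omega\)-null sequences. Hence \(F_\omega((x_n)_\omega)=(F_n(x_n))_\omega\).

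\emph{Inequality \(\le\).} Take \(x\in\M\) with \(\|x\|_\infty\le1\). We need a representative \((x_n)\) with \(\|x_n\|_\infty\le1\) for all \(n\). This is the standard fact that the unit ball of \(\M\) is the image of the product of the unit balls. Given any bounded representative \((y_n)\), let \(x_n\) be the image of \(y_n\) under the function \(t\mapsto\min(t,1)/t\) applied to \(|y_n|\), i.e. truncate the singular values at \(1\) via the polar decomposition. Then \(\|x_n-y_n\|_2\to0\) along \(\omega\): truncation commutes with representatives in the ultraproduct, because it is a continuous functional calculus applied to \(y_n^*y_n\), and \(\|x\|_\infty\le1\) forces the spectral mass of \(|y_n|\) above \(1+\varepsilon\) to be \(\omega\)-negligible in trace. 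With this representative,
\[
\|F_\omega(x)\|_2=\lim_\omega\|F_n(x_n)\|_2\le\lim_\omega\|F_n\|_{\infty\to2}.
\]
Taking the supremum over \(x\) gives \(\le\).

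\emph{Inequality \(\ge\).} Let \(L=\lim_\omega\|F_n\|_{\infty\to2}\), which is finite because \(\|F_n\|_{\infty\to2}\le\|F_n\|_{\infty\to\infty}\le K\). Fix \(\varepsilon>0\). Since \(M_{d_n}\) is finite-dimensional, the unit ball is compact, so for each \(n\) we can pick \(x_n\) with \(\|x_n\|_\infty\le1\) and \(\|F_n(x_n)\|_2\ge\|F_n\|_{\infty\to2}-\varepsilon\). Put \(x=(x_n)_\omega\). Then \(\|x\|_\infty\le1\), and
\[
\|F_\omega(x)\|_2=\lim_\omega\|F_n(x_n)\|_2\ge L-\varepsilon.
\]
Letting \(\varepsilon\to0\) gives \(\ge\).

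The only point needing care is the norm-one lifting in the first inequality. The rest is immediate from the uniform \(2\to2\) bound, which makes \(F_\omega\) well defined and lets norms pass to the ultralimit. If one prefers to avoid the lifting lemma, one can instead take any representative with \(\|y_n\|_\infty\le\|y\|_\infty+\delta\) (which exists by definition of the quotient norm) and rescale by \(1/(1+\delta)\). The rescaling changes \(F_\omega(x)\) by at most \(K\delta/(1+\delta)\) in \(\|\cdot\|_2\), and then \(\delta\to0\).
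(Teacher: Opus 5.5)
Your proof is correct and follows the same route as the paper: contraction representatives give $\|F_\omega\|_{\infty\to2}\le\lim_\omega\|F_n\|_{\infty\to2}$, coordinatewise (near-)maximizing contractions give the reverse inequality, and the uniform $2\to2$ bound makes $F_\omega$ well defined. Your singular-value truncation, or alternatively the rescaling argument, spells out the contraction-lifting step that the paper leaves implicit.
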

\begin{proof}
Contraction representatives give the inequality from left to right.
For the reverse inequality choose a maximizing contraction in each
finite-dimensional coordinate and take its class. The $2\to2$ bound
ensures that the induced map is well-defined.
\end{proof}

\begin{proposition}
\label{pre:expectation}
Suppose the tuple $(u_1,\dots,u_h)$ has spectral gap in the ultraproduct. Then there are integers $\ell_n\to_\omega\infty$
such that $\Phi_n=T_n^{\ell_n}$ induces $E_C$ on every bounded
sequence. The same holds for $T_n^{k_n}$ whenever $1\le k_n\le\ell_n$
and $k_n\to_\omega\infty$. Moreover, for any unital $*$-subalgebras
$A_n\subset M_{d_n}$,
\[
 [A_n]_\omega=C\quad\Longleftrightarrow\quad
 \|\Phi_n-E_{A_n}\|_{\infty\to2}\longrightarrow_\omega0.
\]
\end{proposition}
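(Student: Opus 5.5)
The plan is to use Lemma~\ref{pre:transfer} together with the spectral estimate \eqref{pre:spectralbounds}, and a diagonal choice of $\ell_n$.

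\emph{Choosing $\ell_n$.} For fixed $j$, the maps $T_n^j$ are u.c.p.\ and trace-preserving, hence contractive in both $\infty\to\infty$ and $2\to2$ norms, and they induce $T^j$ on $\M$. The maps $T^j-T^k$ are then also uniformly bounded. By Lemma~\ref{pre:transfer} and \eqref{pre:spectralbounds},
\[
 \lim_\omega\|T_n^k-T_n^j\|_{\infty\to2}=\|T^k-T^j\|_{\infty\to2}\le(1-\kappa)^j\qquad(j\le k).
\]
So for each $m$ the set
\[
 S_m=\Bigl\{n:\ \|T_n^k-T_n^{m}\|_{\infty\to2}\le 2(1-\kappa)^m\ \text{for all } m\le k\le m^2\Bigr\}
\]
lies in $\omega$, being a finite intersection of sets in $\omega$. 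Replacing $S_m$ by $S_1\cap\dots\cap S_m\cap[m,\infty)$ makes the sets decreasing with empty intersection. Define $\ell_n=m^2$ where $m$ is the largest index with $n\in S_m$, and $\ell_n=1$ if there is none. Then $\ell_n\to_\omega\infty$.

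\emph{$T_n^{k_n}$ induces $E_C$.} Let $k_n\to_\omega\infty$ with $k_n\le\ell_n$, and fix $m$. For $\omega$-almost every $n$ we have $k_n\ge m$ and $n\in S_{m'}$ with $\ell_n=m'^2$, where $m'\ge m$. If $k_n\le m'^2$, the defining property of $S_{m'}$ (applied with base $m'$ and with base $m$, both allowed since $k_n,m'\le m'^2$ for $m\le m'$) gives
\[
 \|T_n^{k_n}-T_n^{m}\|_{\infty\to2}\le 4(1-\kappa)^m .
\]
Hence for a bounded sequence $(x_n)$ with $\|x_n\|\le1$, the class of $(T_n^{k_n}x_n)$ lies within $4(1-\kappa)^m$ in $\|\cdot\|_2$ of $T^m x$. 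The latter lies within $(1-\kappa)^m$ of $E_C(x)$ by \eqref{pre:spectralbounds}. Letting $m\to\infty$ shows that the class is $E_C(x)$.

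The main obstacle is bookkeeping in this step: a uniform-in-$k$ window is needed, which is why $S_m$ uses the range $m\le k\le m^2$ rather than a single exponent. One also has to make the $m\to m'$ comparison rigorous; the cleaner route is to put into $S_m$ the condition for all pairs $j\le k$ in $[\sqrt m,m^2]$.

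\emph{The equivalence.} Suppose first that $[A_n]_\omega=C$. For contractions $x_n$ chosen to nearly maximize $\|\Phi_n-E_{A_n}\|_{\infty\to2}$, the class of $\Phi_n(x_n)$ is $E_C(x)$. We also need $(E_{A_n}(x_n))_\omega=E_{[A_n]_\omega}(x)$. This holds because the class of $E_{A_n}(x_n)$ lies in $[A_n]_\omega$, and $x-(E_{A_n}x_n)_\omega$ is orthogonal to every $(a_n)_\omega$ with $\|a_n\|$ bounded; the bounded elements are dense in the $2$-norm. Hence the $\omega$-limit of the norms is $0$.

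Conversely, suppose $\|\Phi_n-E_{A_n}\|_{\infty\to2}\to_\omega 0$. Then for any bounded $(x_n)$ we get $E_C(x)=(E_{A_n}x_n)_\omega=E_{[A_n]_\omega}(x)$. Since both are conditional expectations onto von Neumann subalgebras of $\M$ with the same range on the bounded part, $C=[A_n]_\omega$.
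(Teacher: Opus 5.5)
Your overall route is the paper's. You transfer $\|T^k-T^j\|_{2\to2}\le(1-\kappa)^j$ to $\omega$-limits of $\|T_n^k-T_n^j\|_{\infty\to2}$ via Lemma~\ref{pre:transfer}, choose $\ell_n$ diagonally, and get the equivalence by identifying the class of $(E_{A_n}x_n)$ with $E_{[A_n]_\omega}(x)$. That last part is fine. The gap is in the diagonal step, the one you flagged yourself. Your sets $S_m$ only compare with the single base exponent $m$, on the window $m\le k\le m^2$. Routing $\|T_n^{k_n}-T_n^m\|_{\infty\to2}\le4(1-\kappa)^m$ through $T_n^{m'}$ needs two things. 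You need $k_n\ge m'$ to use the base-$m'$ window, and $m'\le m^2$ to use the base-$m$ window. Neither is available. First, $k_n$ can lie anywhere in $[m,m'^2]$. Second, $m'\to_\omega\infty$, so for fixed $m$ one has $m'>m^2$ for $\omega$-almost every $n$. The second failure already occurs for $k_n=\ell_n$, so even the claim that $\Phi_n$ induces $E_C$ is not established. Your suggested repair, using all pairs in $[\sqrt m,m^2]$, does not by itself close the gap. At a given $n$ you control a pair $(j,k)$ only if it lies in a single window $[\sqrt{m''},m''^2]$ with $m''\le m'$. For $j=m$ this forces $m''\le m^2$ and hence $k\le m^4$, while $k_n\to_\omega\infty$.

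The fix used in the paper's proof is to drop the lower cutoff. Require $\|T_n^k-T_n^j\|_{\infty\to2}\le2(1-\kappa)^j$ (the paper uses $(1-\kappa)^j+1/m$) for all pairs $1\le j\le k\le m$. This is still a finite intersection of $\omega$-large sets. Nest these sets inside $\{n\ge m\}$ and let $\ell_n$ be the largest $m$ whose set contains $n$. Fix $j$. For $\omega$-almost every $n$ you have $j\le k_n\le\ell_n$, and $n$ lies in the $\ell_n$-th set. So the pair $(j,k_n)$ is controlled directly, giving $\|T_n^{k_n}-T_n^j\|_{\infty\to2}\le2(1-\kappa)^j$. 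Your comparison with $E_C$ through $T^jx$ then goes through. Alternatively, keep your original nested sets and telescope along $m,m^2,m^4,\dots$ up to $m'$, with one final link to $k_n$. That gives a bound of order $(1-\kappa)^m/\kappa$. Some such argument has to be supplied; the two-step triangle inequality as written is not valid.
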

\begin{proof}
By Lemma~\ref{pre:transfer} and \eqref{pre:spectralbounds}, for each
fixed $1\le j\le k$,
\[
 \lim_\omega\|T_n^k-T_n^j\|_{\infty\to2}\le (1-\kappa)^j.
\]
Choose decreasing sets $W_m\in\omega$, contained in
$\{n:n\ge m\}$, on which simultaneously
\[
 \|T_n^k-T_n^j\|_{\infty\to2}\le (1-\kappa)^j+1/m
          \quad(1\le j\le k\le m).
\]
Only finitely many conditions are imposed at each stage. Set
$\ell_n=\max(\{m\le n:n\in W_m\}\cup\{1\})$.
Then $\ell_n\to_\omega\infty$, and for each fixed $j$,
\[
 \lim_\omega\|T_n^{\ell_n}-T_n^j\|_{\infty\to2}\le (1-\kappa)^j.
\]
The induced map $\Phi$ therefore satisfies
$\|\Phi-E_C\|_{\infty\to2}\le2(1-\kappa)^j$ for every $j$, so equals $E_C$.
The same estimates apply to every $k_n\le\ell_n$ tending to infinity.

Coordinate expectations induce the expectation onto $[A_n]_\omega$:
their outputs have bounded representatives in $A_n$, they fix all
such representatives, and their trace orthogonality passes to the
quotient. Apply Lemma~\ref{pre:transfer} to $\Phi_n-E_{A_n}$ to
obtain both implications of the final assertion.
\end{proof}

\subsection{Matrix norm and spectral estimates}

We use the \emph{Hilbert--Schmidt Lipschitz estimate}
\begin{equation}\label{pre:hs-lipschitz}
 \|g(A)-g(B)\|_2\le\operatorname{Lip}(g)\|A-B\|_2
\end{equation}
for self-adjoint matrices $A,B$ and scalar Lipschitz functions $g$.
It follows by expanding in eigenbases of $A$ and $B$.
The \emph{Araki--Yamagami inequality}
\cite[Theorem~1]{ArakiYamagami} gives, also for rectangular matrices,
\begin{equation}\label{pre:absolute-value}
 \||X|-|Y|\|_2^2\le2\|X-Y\|_2^2.
\end{equation}
Rectangular norms and auxiliary direct sums use the denominator of
the original matrix algebra. Clipping singular values at one is the
Hilbert--Schmidt metric projection onto the operator-norm unit ball;
it is $1$-Lipschitz and commutes with left and right unitary multiplication.

\begin{lemma}[Spectral coarea]\label{pre:coarea}
For a unitary tuple $u$, put
$\mathcal E_u(x)=(4h)^{-1}\sum_j\|[u_j,x]\|_2^2$.
For self-adjoint $x$ and $a<b$,
\begin{equation}\label{pre:global-coarea}
 \int_a^b\mathcal E_u(1_{[s,\infty)}(x))\,ds
 \le\tfrac12\sqrt{\mathcal E_u(x)}.
\end{equation}
If $(x_i)$ are positive contractions of total trace
$t=\sum_i\tau(x_i)$, then, for $0<a<b\le1$,
\[
 \int_a^b\sum_i\mathcal E_u(1_{[s,1]}(x_i))\,ds
 \le\left(\frac{t}{2a}\sum_i\mathcal E_u(x_i)\right)^{1/2}.
\]
Consequently, in the first case there exists $s\in[a,b]$ such that
\[
 \mathcal E_u(1_{[s,\infty)}(x))
 \le\frac{\sqrt{\mathcal E_u(x)}}{2(b-a)},
\]
and in the second case there exists $s\in[a,b]$ such that
\[
 \sum_i\mathcal E_u(1_{[s,1]}(x_i))
 \le\frac1{b-a}
       \left(\frac{t}{2a}\sum_i\mathcal E_u(x_i)\right)^{1/2}.
\]

\end{lemma}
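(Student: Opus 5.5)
The plan is to reduce both inequalities to a scalar coarea argument for each commutator, working in an eigenbasis of $x$. Write $x=\sum_k\lambda_k p_k$ with spectral projections $p_k$, and $q_s=1_{[s,\infty)}(x)$. For a single unitary $u$, expand $\|[u,y]\|_2^2=\|uy-yu\|_2^2=\|uyu^*-y\|_2^2$. Set $w_{kl}=\tau(p_k u p_l u^*)\ge0$. Then
\[
\|[u,x]\|_2^2=\sum_{k,l}(\lambda_k-\lambda_l)^2 w_{kl},
\qquad
\|[u,q_s]\|_2^2=\sum_{k,l}|1_{\lambda_k\ge s}-1_{\lambda_l\ge s}|\,w_{kl}.
\]
Both identities follow by expanding $\|uyu^*-y\|_2^2=2\tau(y^2)-2\tau(uyu^*y)$ in the spectral decomposition. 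Integrating the second over $s\in\mathbb R$ gives $\sum_{k,l}|\lambda_k-\lambda_l|w_{kl}$, the classical coarea formula; restricting to $[a,b]$ only decreases it. By Cauchy--Schwarz with $\sum_{k,l}w_{kl}=\tau(1)=1$,
\[
\sum_{k,l}|\lambda_k-\lambda_l|w_{kl}\le\Bigl(\sum_{k,l}(\lambda_k-\lambda_l)^2w_{kl}\Bigr)^{1/2}=\|[u,x]\|_2 .
\]
Averaging over $j$ with weight $1/(4h)$ and applying Cauchy--Schwarz across $j$ gives $\int_a^b\mathcal E_u(q_s)\,ds\le (4h)^{-1}\sum_j\|[u_j,x]\|_2\le (4h)^{-1}\sqrt{h}\,(\sum_j\|[u_j,x]\|_2^2)^{1/2}=\tfrac12\sqrt{\mathcal E_u(x)}$. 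This is \eqref{pre:global-coarea}.

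For the multi-element version I would run the same computation for each $x_i$, but apply Cauchy--Schwarz with a weight adapted to the window $s\ge a$. Only pairs $(k,l)$ with $\max(\lambda_k,\lambda_l)\ge a$ contribute to $\int_a^b$. For such pairs the contribution is at most $|\lambda_k-\lambda_l|w_{kl}$. I will bound it by $\bigl((\lambda_k-\lambda_l)^2w_{kl}\bigr)^{1/2}\bigl(w_{kl}1_{\max\ge a}\bigr)^{1/2}$ and apply Cauchy--Schwarz jointly over $i,j,k,l$. The first factor sums to $4h\sum_i\mathcal E_u(x_i)$. For the second, by symmetry $\sum_{k,l}w_{kl}1_{\max(\lambda_k,\lambda_l)\ge a}\le 2\sum_{k:\lambda_k\ge a}\tau(p_k)\le 2\tau(x_i)/a$, using $\sum_l w_{kl}=\tau(p_k)$ and Markov's inequality for the positive contraction $x_i$. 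Summing over $j$ and $i$ gives $2ht/a$. Collecting the factor $(4h)^{-1}$:
\[
(4h)^{-1}\bigl(4h\textstyle\sum_i\mathcal E_u(x_i)\bigr)^{1/2}(2ht/a)^{1/2}=\bigl(\tfrac{t}{2a}\sum_i\mathcal E_u(x_i)\bigr)^{1/2},
\]
which is exactly the claimed constant. Since $x_i\le1$, the projections $1_{[s,1]}(x_i)$ coincide with $1_{[s,\infty)}(x_i)$.

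The two ``consequently'' statements then follow by averaging. An integrable nonnegative function on $[a,b]$ takes a value at most its mean at some point; the integrand is a finite step function of $s$, so such an $s$ exists. Dividing by $b-a$ gives both bounds.

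The main point requiring care is the bookkeeping of the constant in the second estimate. The symmetric counting of pairs with $\max\ge a$ must produce precisely the factor $2$ that turns $4h\cdot 2ht/a$ into the stated $\tfrac{t}{2a}$ after normalization. If the factor comes out off by $2$, I would instead use the bound $1_{\max\ge a}\le 1_{\lambda_k\ge a}+1_{\lambda_l\ge a}$ and recheck it against the normalization $(4h)^{-1}$ in $\mathcal E_u$.
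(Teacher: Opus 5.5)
Your proposal is correct and is essentially the paper's proof. The paper also expands $\mathcal E_u$ in an eigenbasis of $x$ with nonnegative weights, bounds the integrated indicator difference by $|\lambda_k-\lambda_l|$, and applies Cauchy--Schwarz; for the family, it discards pairs with both eigenvalues below $a$ and bounds the remaining weight via row/column sums and $1_{[a,1]}(x_i)\le x_i/a$. The only cosmetic difference is that the paper pools the weights over $j$ into $c_{k\ell}$ of total mass $1/4$, whereas you keep per-unitary weights $\tau(p_kup_lu^*)$ and add a Cauchy--Schwarz over $j$. Your constant bookkeeping comes out exactly, so the hedge in your last paragraph is unnecessary.
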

\begin{proof}
In an eigenbasis, the entry weights
$c_{k\ell}=(4hd)^{-1}\sum_j|(u_j)_{k\ell}|^2$
have row and column sums $1/(4d)$ and total mass $1/4$.
The integrated squared difference of two spectral indicators is at
most the absolute eigenvalue difference. Cauchy--Schwarz proves
\eqref{pre:global-coarea}.
For the family estimate, pairs with both eigenvalues below $a$
contribute zero. Their complementary total weight, summed over the
labels, is at most
$\frac12\sum_i\tau(1_{[a,1]}(x_i))\le t/(2a)$.
Apply the same Cauchy--Schwarz inequality to these pairs.
\end{proof}

\subsection{Making projections almost invariant}
\label{pre:repairsection}

The following theorem is the analogue of Kun's projection-improvement
argument~\cite[Corollary~9 and Proposition~11]{KunExpanders}. Notice
that this variant does not use the full power of property~$(T)$ as in
Kun's original approach, but only spectral gap in the ultraproduct,
similarly to~\cite[Proposition~5.10]{AlekseevDrigalla}.

\begin{theorem}\label{pre:repair}
Suppose the tuple $(u_1,\dots,u_h)$ has spectral gap in the ultraproduct. Then there are projections $r_n$ and positive
numbers $\alpha_n\to_\omega0$ such that $\tau(1-r_n)\le\alpha_n$,
and every projection $p\le r_n$ with
$\mathcal E_n(p)\le\kappa\tau(p)/64$ admits a projection
$f\in M_{d_n}$ satisfying
\begin{equation}\label{pre:repairbounds}
 \|f-p\|_2^2\le36\kappa^{-1}\mathcal E_n(p),\qquad
 \tfrac13\tau(p)\le\tau(f)\le\tfrac53\tau(p),\qquad
 \mathcal E_n(f)\le\alpha_n\tau(p).
\end{equation}
\end{theorem}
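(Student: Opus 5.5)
Given a projection $p$ with small boundary energy, we smooth it by the operator $\Phi_n=T_n^{\ell_n}$ of Proposition~\ref{pre:expectation}, which asymptotically induces $E_C$. We then recover a projection by a spectral cut of $x=\Phi_n(p)$ via the coarea Lemma~\ref{pre:coarea}. The point is that $\Phi_n(p)$ is an almost-invariant positive contraction: in the ultraproduct it lies in $C$ up to a uniformly small error. By spectral gap, it is also close to $p$, namely
\[
 \|p-\Phi_n(p)\|_2^2\lesssim\kappa^{-1}\mathcal E_n(p).
\]

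\textbf{Step 1 (closeness).} On $M_{d_n}$ the operator $T_n$ is self-adjoint with $0\le T_n\le I$. Write $p=E_C p+\xi$ heuristically. Then $\|p-T_n^{k}p\|_2^2\le \langle (I-T_n)p,p\rangle\cdot\sum$-type bounds hold. A cleaner route uses the ultraproduct gap, transferred to a uniform coordinatewise estimate through Lemma~\ref{pre:transfer}. For large $n$ in an $\omega$-set, every $x$ in the unit ball satisfies
\[
 \|x-\Phi_n x\|_2^2\le(\kappa^{-1}+o(1))\,\mathcal E_n(x)+\beta_n,
\]
with $\beta_n\to_\omega 0$. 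Here $\beta_n$ is an additive error not proportional to $\tau(p)$, and this is exactly why the exceptional projections $r_n$ are needed.

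\textbf{Step 2 (construction of $r_n$ and $\alpha_n$; the main obstacle).} The estimates must be relative to $\tau(p)$, while transfer only gives additive errors uniform over contractions. I would define $\alpha_n$ so that the uniform quantity
\[
 \delta_n=\sup_{\|x\|\le1}\mathcal E_n(\Phi_n x)
\]
tends to $0$. This follows because $\Phi_n$ induces $E_C$ with values in $C$, and Lemma~\ref{pre:transfer} applies to $(I-T_n)^{1/2}\Phi_n$. To make the errors proportional to $\tau(p)$, I first try this: consider $y_n=\Phi_n(q)$ for small $q$. The failure of proportionality is concentrated on projections where $\Phi_n$ spreads mass. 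So let $r_n$ be the complement of a spectral projection of a single positive element capturing where $\Phi_n$ behaves badly. This can be done, for example, by exhausting through a maximal family of projections violating proportional bounds with threshold $\sqrt{\delta_n}$. Their total trace would be $\lesssim\sqrt{\delta_n}$ by summing the uniform bound over an orthogonal family. The orthogonal sum argument requires $\mathcal E_n(\Phi_n(\sum q_i))$ to dominate $\sum\mathcal E_n(\Phi_n q_i)$, and this is the delicate point. I would handle it by applying the family version of the coarea lemma. Set $\alpha_n$ to be a suitable power of $\delta_n$ plus $\tau(1-r_n)$.

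\textbf{Step 3 (cut and estimates).} For $p\le r_n$, apply the second form of Lemma~\ref{pre:coarea} to $x=\Phi_n(p)$ with $[a,b]=[1/3,2/3]$. This gives $s$ with
\[
 \mathcal E_n(1_{[s,1]}(x))\le 3\bigl(\tfrac{3}{2}\tau(p)\mathcal E_n(x)\bigr)^{1/2}\lesssim\alpha_n\tau(p),
\]
using the proportional bound $\mathcal E_n(x)\le\alpha_n^2\tau(p)$ from Step 2. Set $f=1_{[s,1]}(x)$. Since $p$ is a projection and $s\in[1/3,2/3]$, pointwise $|1_{[s,1]}(t)-t|$ and the distance to $\{0,1\}$ give
\[
 \|f-p\|_2\le 3\|x-p\|_2 .
\]
So $\|f-p\|_2^2\le 9\cdot 4\kappa^{-1}\mathcal E_n(p)$. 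The hypothesis $\mathcal E_n(p)\le\kappa\tau(p)/64$ then gives $\|f-p\|_2^2\le \tfrac{9}{16}\tau(p)$. Finally,
\[
 |\tau(f)-\tau(p)|\le\|f-p\|_2^2+2|\tau(p(f-p))|,
\]
and this bound yields the trace window $[\tfrac13,\tfrac53]\tau(p)$ after adjusting constants.
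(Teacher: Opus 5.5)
You have the same architecture as the paper's proof: smoothing by $\Phi_n=T_n^{\ell_n}$, defining $r_n$ by removing a maximal orthogonal family of bad projections, cutting $\Phi_n(p)$ spectrally on $[1/3,2/3]$, and comparing the cut with $p$ at a cost of a factor $9$. However, the step you flag as delicate is left open, and the tool you propose cannot close it. The domination $\mathcal E_n(\Phi_n(\sum_iq_i))\ge\sum_i\mathcal E_n(\Phi_n(q_i))$ is false in general. For $q_1+q_2=1$ the left side is $\mathcal E_n(1)=0$, while the right side is $2\mathcal E_n(\Phi_n(q_1))$, which need not vanish. The family coarea lemma takes $\sum_i\mathcal E_n(x_i)$ as an \emph{input}, so it cannot bound this sum.

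The missing idea is averaging over independent random signs $\epsilon_i\in\{\pm1\}$. Each $\sum_i\epsilon_iq_i$ is a contraction, and $x\mapsto\mathcal E_n(\Phi_n(x))$ is a quadratic form, so the mixed terms cancel in expectation:
\[
 \sum_i\mathcal E_n(\Phi_n(q_i))
 =\mathbb E_\epsilon\,\mathcal E_n\Bigl(\Phi_n\Bigl(\sum_i\epsilon_iq_i\Bigr)\Bigr)
 \le\delta_n .
\]
Hence a maximal orthogonal family with $\mathcal E_n(\Phi_n(q))>\sqrt{\delta_n}\,\tau(q)$ has total trace at most $\sqrt{\delta_n}$. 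No domination by the energy of the plain sum is needed.

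There is a second gap. Step~3 uses $\|\Phi_n(p)-p\|_2^2\le4\kappa^{-1}\mathcal E_n(p)$, but Step~1 gives this only up to the additive error $\beta_n$. That error is not small compared with $\mathcal E_n(p)$ when $\tau(p)$ is small, and your $r_n$ does not exclude such $p$. The paper adds a second bad class: $p$ is distance-bad if $\mathcal E(p)\ge\alpha\tau(p)$ and $\|p-\Phi(p)\|_2>2\kappa^{-1/2}\mathcal E(p)^{1/2}$.

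Write Step~1 unsquared, as $\|x-\Phi(x)\|_2\le\kappa^{-1/2}\mathcal E(x)^{1/2}+\beta$ for contractions $x$. The same random signs, with Minkowski's inequality in $L^2(\epsilon)$, then give $(\sum_i\|p_i-\Phi(p_i)\|_2^2)^{1/2}\le\kappa^{-1/2}(\sum_i\mathcal E(p_i))^{1/2}+\beta$ for orthogonal families. For distance-bad $p_i$ this forces $\sum_i\mathcal E(p_i)\le\kappa\beta^2$. Only the built-in lower bound $\mathcal E(p_i)\ge\alpha\tau(p_i)$ turns this into $\sum_i\tau(p_i)\le\kappa\beta^2/\alpha$. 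Correspondingly, $r_n$ is the complement of the join of both maximal families, and projections $p\le r_n$ with $\mathcal E_n(p)<\alpha_n\tau(p)$ are handled separately by taking $f=p$.

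Finally, you cannot reach the trace window by adjusting constants, since $1/3$, $5/3$ and $\kappa/64$ are fixed in the statement. Your inequality with $2|\tau(p(f-p))|$ throws away the sign $\tau(p(f-p))\le0$. For projections, $|\tau(f)-\tau(p)|\le\|f-p\|_2^2$ because $\tau(fp)\le\min\{\tau(f),\tau(p)\}$. This gives $|\tau(f)-\tau(p)|\le\frac9{16}\tau(p)$ directly.
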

\begin{proof}
Use $\Phi_n=T_n^{\ell_n}$ from Proposition~\ref{pre:expectation} and set
\begin{equation}\label{pre:lift-defects}
 \beta_n=\sup_{\|x\|\le1}\max\left\{
 \bigl(\|x-\Phi_n(x)\|_2-\kappa^{-1/2}\mathcal E_n(x)^{1/2}\bigr)_+,
 \mathcal E_n(\Phi_n(x))^{1/2}\right\}.
\end{equation}
Then $\beta_n\to_\omega0$: contraction witnesses would otherwise
contradict the quotient Poincar\'e inequality or $E_C(x)\in C$.
The finite commutator formula ensures that energy passes to the quotient.
Since $\beta_n\le2$, the choice
\begin{equation}\label{pre:repair-sequence}
 \alpha_n=8\sqrt{\beta_n}+\ell_n^{-1}+n^{-1}
\end{equation}
tends to zero and dominates $\beta_n$, $\ell_n^{-1}$ and $n^{-1}$.

At a fixed coordinate suppress $n$. Call a nonzero projection
\emph{distance-bad} if
\[
 \mathcal E(p)\ge \alpha\tau(p),\qquad
 \|p-\Phi(p)\|_2>2\kappa^{-1/2}\mathcal E(p)^{1/2}.
\]
For an orthogonal family $(p_i)$ of such projections, every random-sign
sum $\sum_i\epsilon_i p_i$ is a contraction. Apply the definition of
$\beta$, take the root mean square over the independent signs, and
use Minkowski's inequality. Cancellation of the mixed terms gives
\[
 \left(\sum_i\|p_i-\Phi(p_i)\|_2^2\right)^{1/2}
 \le \kappa^{-1/2}\left(\sum_i\mathcal E(p_i)\right)^{1/2}+\beta.
\]
Badness implies $\sum_i\mathcal E(p_i)\le\kappa\beta^2$, hence
$\sum_i\tau(p_i)\le\kappa\beta^2/\alpha$.
Independently call $p\ne0$ \emph{energy-bad} if
$\mathcal E(\Phi(p))>\alpha^2\tau(p)/36$. For an orthogonal family of
energy-bad projections, the same signs give
\[
 \sum_i\mathcal E(\Phi(p_i))
 =\mathbb E_\epsilon\mathcal E\!\left(\Phi\left(\sum_i\epsilon_i p_i\right)\right)
 \le\beta^2,
 \qquad \sum_i\tau(p_i)\le36\beta^2/\alpha^2.
\]
Choose a maximal orthogonal family of each kind, with sums $b_n,c_n$,
and put $r_n=1-(b_n\vee c_n)$. Each family is finite because every
nonzero projection has positive integer rank. Then
\[
 \tau(1-r_n)\le\min\{1,\kappa\beta_n^2/\alpha_n+36\beta_n^2/\alpha_n^2\}\le\alpha_n.
\]
For the last inequality, use $\beta_n\le\alpha_n^2/64$ when
$\alpha_n\le1$; when $\alpha_n\ge1$ use $\tau(1-r_n)\le1$.
No nonzero projection under $r_n$ is bad in either sense.

Let $p\le r_n$ satisfy the stated energy threshold. For $p=0$ take
$f=0$. If $\mathcal E_n(p)<\alpha_n\tau(p)$, take $f=p$.
In the remaining case set $z=\Phi_n(p)$. It is a positive contraction
of trace $\tau(p)$, and the two exclusions give
\begin{equation}\label{pre:closeaverage}
 \|z-p\|_2^2\le4\kappa^{-1}\mathcal E_n(p),\qquad
 \mathcal E_n(z)\le \alpha_n^2\tau(p)/36.
\end{equation}
Lemma~\ref{pre:coarea}, applied to $z$ on $[1/3,2/3]$, gives
some $s$ for which $f_s=1_{[s,1]}(z)$ satisfies
\[
 \mathcal E_n(f_s)\le6\sqrt{\tau(z)\mathcal E_n(z)}
                       \le \alpha_n\tau(p).
\]
Every such cut has $\|f_s-p\|_2^2\le9\|z-p\|_2^2$, even when
$p$ and $z$ do not commute. In an eigenbasis $(\xi_i)$ of $z$, with
eigenvalues $\lambda_i$, put
$b_i=\langle p\xi_i,\xi_i\rangle$. The contributions to the two
unnormalized squared distances are, respectively,
$1-b_i$ or $b_i$ according as $\lambda_i\ge s$ or $\lambda_i<s$,
and $\lambda_i^2(1-b_i)+(1-\lambda_i)^2b_i$. In the two cases,
$\lambda_i^2\ge1/9$ or $(1-\lambda_i)^2\ge1/9$.
Thus $f=f_s$ satisfies
\[
 \|f-p\|_2^2\le36\kappa^{-1}\mathcal E_n(p).
\]
For projections, $|\tau(f)-\tau(p)|\le\|f-p\|_2^2$, since
$\tau(fp)\le\min\{\tau(f),\tau(p)\}$. The assumed energy threshold
gives $|\tau(f)-\tau(p)|\le\frac9{16}\tau(p)$, hence the two trace bounds.
Every assertion holds in each coordinate and $\alpha_n\to_\omega0$, completing the proof.
\end{proof}
\section{Orthogonalization of projections with small boundary energy}
\sectionmark{Orthogonalization of projections with small boundary energy}
\label{pov:section}

In this section we collect some preparatory statements which will be used in the proof of the main decomposition theorem. The main goal is to ensure that we can ``orthogonalize'' a maximal almost-invariant projection family keeping the properties of it being almost-invariant and exhaustive (as summarized in Proposition \ref{phy:summary}), and this will be an important step in the course of the proof of Theorem \ref{phy:decomposition}.

The orthogonalization method used below should be compared to
de la Salle's orthogonalization of positive operator valued measures
\cite[Theorem~2 and Section~3]{DeLaSalle}.

All $O(\cdot)$ constants in this section are absolute, also independent
of $h$. Bounds involving $\log(1/s)$ are understood for sufficiently
small $s>0$ and as zero at $s=0$.

\begin{definition}\label{pov:defect}
For a finite family $\boldsymbol q=(q_i)_i$ of projections in $M_d$, put
\begin{equation}\label{pov:covering-defect}
 \delta(\boldsymbol q)=
 \tau\!\left(\left(1-\Bigl(\sum_i q_i\Bigr)^{1/2}\right)_+^2\right).
\end{equation}
We call this the \emph{coverage defect} of the family. It is a one-sided
defect: it vanishes exactly when $\sum_i q_i\ge1$. For an orthogonal
family, $\delta(\boldsymbol q)=\tau(1-\sum_i q_i)$.
\end{definition}

It is possible for a family $\boldsymbol q$ of projections of total trace $1$ to satisfy $\bigvee_i q_i = 1$ and still have large coverage defect: this would happen if the projections are far from being orthogonal. In fact, this is exactly the problem which requires an additional step in the construction compared to the expander decomposition in the sofic case. This is exactly where the next step enters.

\subsection{Orthogonalization of projections using resolvents}

We first note a useful bound on the boundary energy of the resolvent.

\begin{lemma}\label{phy:resolvents}
Let $f_1,\ldots,f_m\in M_d$ be projections and put
$S_{i-1}=\sum_{k<i}f_k$. For every unitary $u$ and $0<\lambda\le1$,
\begin{equation}\label{phy:resolvent-bound}
 \sum_i\|[u,(\lambda+S_{i-1})^{-1}]f_i\|_2^2
 \le C\lambda^{-3}\sum_i\|[u,f_i]\|_2^2,
\end{equation}
where $C$ is an absolute constant.
\end{lemma}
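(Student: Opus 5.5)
The plan is to start from the resolvent identity. Write $R_{i-1}=(\lambda+S_{i-1})^{-1}$, so that $0\le R_{i-1}\le\lambda^{-1}$. Then
\[
 [u,R_{i-1}]=R_{i-1}\,[S_{i-1},u]\,R_{i-1},
\qquad\text{hence}\qquad
 [u,R_{i-1}]f_i=R_{i-1}\Bigl(\sum_{k<i}[f_k,u]\Bigr)X_i,
 \qquad X_i:=R_{i-1}f_i .
\]
Bounding each commutator $[f_k,u]$ separately would cost a factor of the number of projections, which is not allowed. The proof must therefore exploit two things: the row/column structure of the sum over $k$, and the telescoping of the resolvents over $i$.

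\textbf{Step 1: splitting the commutators.} For a projection $f$ one has
\[
 [f,u]=f[f,u]+[f,u]f .
\]
Put $B_k=[f_k,u]$. The inner sum then splits into a ``left'' part $\sum_{k<i}f_kB_kX_i$ and a ``right'' part $\sum_{k<i}B_kf_kX_i$.

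\textbf{Step 2: the left part.} Regard $R_{i-1}\sum_k f_kB_kX_i$ as a row $(R_{i-1}f_k)_k$ multiplied by a column $(B_kX_i)_k$. The row has operator norm
\[
 \|R_{i-1}S_{i-1}R_{i-1}\|^{1/2}\le\|R_{i-1}\|^{1/2}\le\lambda^{-1/2},
\]
because $R S R\le R(\lambda+S)R=R$. Hence
\[
 \Bigl\|R_{i-1}\sum_{k<i}f_kB_kX_i\Bigr\|_2^2\le\lambda^{-1}\sum_{k<i}\|B_kX_i\|_2^2 .
\]
Summing over $i$ and exchanging the order of summation gives
\[
 \lambda^{-1}\sum_k\tau\Bigl(B_k\Bigl(\sum_{i>k}R_{i-1}f_iR_{i-1}\Bigr)B_k^*\Bigr).
\]
\textbf{Step 3: the telescoping estimate.} The key inequality is
\[
 \sum_{i>k}R_{i-1}f_iR_{i-1}\le(1+\lambda^{-1})R_k\le2\lambda^{-2}.
\]
To prove it, set $Y=R_{i-1}^{1/2}f_iR_{i-1}^{1/2}$, which satisfies $\|Y\|\le\lambda^{-1}$. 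Then
\[
 R_{i-1}-R_i=R_{i-1}^{1/2}\,Y(1+Y)^{-1}\,R_{i-1}^{1/2}\ge(1+\lambda^{-1})^{-1}R_{i-1}f_iR_{i-1},
\]
and the differences $R_{i-1}-R_i$ telescope. With this, the left part contributes at most $2\lambda^{-3}\sum_k\|B_k\|_2^2$, which is of the required form.

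\textbf{Step 4: the right part, which I expect to be the main obstacle.} Here the projection $f_k$ sits between $B_k$ and $X_i$, so the clean row bound from Step 2 is not available on the left. My first attempt will be to symmetrize. Use $B_kf_k=-(1-f_k)uf_k$, and rewrite the sum as $\sum_k f_kuf_kX_i-uS_{i-1}X_i$, then recombine with the corresponding terms of the left part. The aim is that everything reduces either to terms of the shape $f_k(\cdot)$, which are handled by the row estimate of Step 2, or to terms of the shape $(\cdot)f_kX_i$ whose column sums are controlled again by Step 3.

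If this does not close, the fallback is to apply the whole argument to the adjoint expression. One uses $[u,R]^*=-[u^*,R]$ and the identity $\|[u^*,f]\|_2=\|[u,f]\|_2$. This moves the $f_k$ to the outer side, where the bound $RSR\le R$ can be used. The absolute constant $C$ is then collected from the two parts.
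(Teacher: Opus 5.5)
There is a genuine gap: Step~4 is the estimate that carries the content of the lemma, and neither of your proposed remedies closes it.

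Your Steps 1--3 are correct. The identity $[u,R_{i-1}]=R_{i-1}[S_{i-1},u]R_{i-1}$, the corner splitting, the row bound $R_{i-1}S_{i-1}R_{i-1}\le R_{i-1}$, and the telescoping inequality $\sum_{i>k}R_{i-1}f_iR_{i-1}\le(1+\lambda^{-1})R_k$ all hold. So the part $L_i=R_{i-1}\sum_{k<i}f_k[f_k,u]R_{i-1}f_i$ does satisfy $\sum_i\|L_i\|_2^2\le2\lambda^{-3}\sum_k\|[f_k,u]\|_2^2$. The other corner is
\[
 \mathcal R_i=[u,R_{i-1}]f_i-L_i=-R_{i-1}\sum_{k<i}(1-f_k)uf_k\,R_{i-1}f_i .
\]
For this term your row/column technique only gives per-index bounds $\|\mathcal R_i\|_2^2\le\lambda^{-3}\sum_{k<i}\|[f_k,u]\|_2^2$, which lose a factor $m$ when summed over $i$.

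The symmetrization does not help. The new column $(uf_kX_i)_k$ has square sum $\tau(X_i^*S_{i-1}X_i)$, which is a trace quantity, not an energy. Reassembling the pieces with $S_{i-1}R_{i-1}=1-\lambda R_{i-1}$ just returns the tautology $\mathcal R_i=[u,R_{i-1}]f_i-L_i$.

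The adjoint trick fails because an adjoint reverses the whole product. The hard corner of $f_i[u^*,R_{i-1}]$ is $f_iR_{i-1}\sum_{k<i}f_ku^*(1-f_k)R_{i-1}=-\mathcal R_i^*$. In it $f_k$ still sits next to $X_i^*=f_iR_{i-1}$, not next to the outer resolvent, and its Hilbert--Schmidt norm equals that of $\mathcal R_i$. As it stands, the proposal bounds only one of the two terms.

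What is missing is a way to control the cross-terms between different $k$ in $[S_{i-1},u]$ globally rather than term by term. The paper does this with a potential function and never expands $S_{i-1}$ in $k$.

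For self-adjoint unitary $u$ and $A>0$, set $H(A)=\tau(AuA^{-1}u)-1=\tfrac12\|A^{-1/2}[u,A]A^{-1/2}\|_2^2\ge0$. Evaluate it along $A_t=\lambda+S_{i-1}+tf_i$, with $D_t=A_t^{-1}$, $\varepsilon_i=\|[u,f_i]\|_2$ and $z_i(t)=\|[u,D_t]f_i\|_2$. A direct computation gives $\frac{d}{dt}H(A_t)=-\operatorname{Re}\tau([u,f_i]^*[u,D_t])-\|(uA_tu)^{1/2}(D_t-uD_tu)f_i\|_2^2\le\lambda^{-1}\varepsilon_i^2-\tfrac\lambda2z_i(t)^2$. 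The cross term is bounded because $[u,f_i]$ is off-diagonal relative to $f_i$, and the negative square uses $uA_tu\ge\lambda$.

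Since $H(\lambda1)=0$ and $H\ge0$, telescoping over all steps gives $\sum_i\int_0^1z_i(t)^2\,dt\le2\lambda^{-2}\sum_i\varepsilon_i^2$, with no dependence on $m$. The identity $D_0=D_t+tD_0f_iD_t$ then yields $z_i(0)\le3z_i(t)+\lambda^{-1}\varepsilon_i$ for $t\le\lambda/2$. Integrating over $[0,\lambda/2]$ costs one more factor $\lambda^{-1}$, which gives $\lambda^{-3}$. General unitaries are handled through the self-adjoint dilation, with the projections $f_i\oplus f_i$.

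Your Step~4 needs a device of this kind, in which the troublesome cross-terms are absorbed into a nonnegative quantity whose increments are controlled.
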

\begin{proof}
First suppose $u=u^*$. The function
\[
 H(A)=\tau(AuA^{-1}u)-1
     =\tfrac12\|A^{-1/2}[u,A]A^{-1/2}\|_2^2
 \qquad(A>0)
\]
is nonnegative. On step $i$ put
$A_t=\lambda+S_{i-1}+tf_i$, $D_t=A_t^{-1}$,
$\varepsilon_i=\|[u,f_i]\|_2$ and
$z_i(t)=\|[u,D_t]f_i\|_2$. Differentiation gives
\[
 \frac{d}{dt}H(A_t)
 =-\operatorname{Re}\tau([u,f_i]^*[u,D_t])
   -\|(uA_tu)^{1/2}(D_t-uD_tu)f_i\|_2^2
 \le\lambda^{-1}\varepsilon_i^2-\tfrac\lambda2z_i(t)^2.
\]
Indeed, $uA_tu\ge\lambda$, and the first term has absolute value
at most $\sqrt2\varepsilon_i z_i(t)$ because $[u,f_i]$ is off
diagonal relative to $f_i$ and both commutators are skew-adjoint.
Telescoping from $H(\lambda1)=0$ yields
\[
 \sum_i\int_0^1z_i(t)^2\,dt
 \le2\lambda^{-2}\sum_i\varepsilon_i^2.
\]
The identity $D_0=D_t+tD_0f_iD_t$, after taking commutators and
multiplying on the right by $f_i$, gives
$z_i(0)\le3z_i(t)+\lambda^{-1}\varepsilon_i$ for
$0\le t\le\lambda/2$. Squaring and integrating proves
\eqref{phy:resolvent-bound}. For arbitrary $u$, apply this argument
to its self-adjoint unitary dilation
$\left(\begin{smallmatrix}0&u\\u^*&0\end{smallmatrix}\right)$
and the projections $f_i\oplus f_i$, at an absolute cost.
\end{proof}

The next lemma improves coverage of a family of projections, effectively (almost) orthogonalizing them similarly to the Gram--Schmidt process. This happens through using resolvent differences, the main observation being that if $S_i$ are increasing sums of projections, then for small $\lambda > 0$ the resolvent difference
\[
\lambda((\lambda +S_{i-1})^{-1}-(\lambda +S_{i})^{-1})
\]
strongly converges to $\supp S_i - \supp S_{i-1}$. Therefore taking appropriate spectral projections of these resolvents yields a family with small coverage defect.

\begin{lemma}\label{phy:covariance}
Let $u=(u_1,\ldots,u_h)\in U(d)^h$, let
$e,f_1,\ldots,f_m\in M_d$ be projections, and let $0<\gamma\le1/4$.
Suppose
\[
 \tau(1-e)+\sum_i\tau(f_i)\le3,\qquad
 \mathcal E_u(1-e)+\sum_i\mathcal E_u(f_i)\le\gamma^{16}.
\]
Put $S_0=1-e$ and $S_i=S_0+\sum_{j\le i}f_j$.
There exist an absolute constant $C$ and a family
$\boldsymbol q=(q_i)_i$ of projections such that
\begin{equation}\label{phy:resolvent-projections}
 \rank q_i\le\rank f_i,\qquad
 \sum_i\mathcal E_u(q_i)\le C\gamma^2,\qquad
 \sum_i\tau(S_{i-1}^2q_i)\le3\gamma,
\end{equation}
and
\begin{equation}\label{phy:resolvent-covering}
 \delta(\boldsymbol q)
 \le\tau(1-e)+\tau(1_{[0,\gamma]}(S_m))+7\gamma.
\end{equation}
\end{lemma}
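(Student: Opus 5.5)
The plan is to build the $q_i$ as spectral projections of the resolvent differences suggested before the statement. Fix $\lambda=\gamma^{c}$ for a suitable power $c$ (I expect something like $\lambda=\gamma^4$ will balance the estimates) and put $R_i=(\lambda+S_i)^{-1}$. Define
\[
 y_i=\lambda^{1/2}\,f_i\,R_{i-1}\,f_i\cdot\text{(normalisation)}\quad\text{or rather}\quad
 x_i=\lambda\bigl(R_{i-1}-R_i\bigr)=\lambda R_{i-1}f_iR_i .
\]
The $x_i$ are positive, telescope to $\lambda R_0-\lambda R_m$, and are supported in the range of $R_{i-1}f_i$, so they have rank at most $\rank f_i$. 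Set $q_i=1_{[s,1]}(x_i)$ for a threshold $s\in[a,b]$ chosen by the family version of Lemma~\ref{pre:coarea}, with $[a,b]=[\gamma,2\gamma]$ or similar. This automatically gives $\rank q_i\le\rank f_i$.

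\textbf{Energy.} By Lemma~\ref{phy:resolvents}, $\sum_i\|[u_j,R_{i-1}]f_i\|_2^2\le C\lambda^{-3}\sum_i\|[u_j,f_i]\|_2^2$. Together with the identity $x_i=\lambda R_{i-1}f_iR_i$ and $\|R_i\|\le\lambda^{-1}$, this bounds $\sum_i\mathcal E_u(x_i)$ by $O(\lambda^{-c'})\gamma^{16}$. Two points need care here. First, the right-hand resolvent $R_i$ must also be controlled; I would write $R_i=R_{i-1}-R_{i-1}f_iR_i$, or apply Lemma~\ref{phy:resolvents} to the reversed product via adjoints. Second, $S_0=1-e$ is not one of the $f_i$; I would treat it as an extra projection $f_0$, which is why $\mathcal E_u(1-e)$ appears in the hypothesis. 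The total trace of the $x_i$ is at most $\tau(\lambda R_0)\le 1$. The family coarea estimate then yields
\[
 \sum_i\mathcal E_u(q_i)\le \gamma^{-1}\bigl(\gamma^{-1}\,O(\lambda^{-c'})\gamma^{16}\bigr)^{1/2},
\]
which is $O(\gamma^2)$ if $\lambda$ is a small enough power of $\gamma$. This dictates the choice of $c$.

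\textbf{Near-orthogonality.} For the bound on $\sum_i\tau(S_{i-1}^2q_i)$, note that on the range of $q_i$ we have $x_i\ge s\ge\gamma$, so
\[
 \tau(S_{i-1}^2q_i)\le\gamma^{-1}\tau\bigl(S_{i-1}^2x_i\bigr)
 =\gamma^{-1}\lambda\,\tau\bigl(S_{i-1}^2R_{i-1}f_iR_i\bigr).
\]
Since $S_{i-1}R_{i-1}$ is a contraction and $f_iR_i$ has small norm away from the kernel of $S_i$, each term is of order $\lambda\tau(f_i)$. Summing gives at most $3\lambda/\gamma\le\gamma$ for suitable $\lambda$. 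Some slack in the constant $3\gamma$ absorbs cross terms.

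\textbf{Coverage.} Here $\sum_ix_i=\lambda R_0-\lambda R_m$. On the range of $e$ (where $S_0=0$) and on spectral subspaces where $S_m\ge\gamma$, this is at least $1-\lambda/\gamma$. Discarding the parts where $x_i<s$ costs at most $O(s)$ per unit of trace, since $\sum_i x_i1_{[0,s)}(x_i)\le s\sum_i 1_{[0,s)}(x_i)$, and we need this in trace against a contraction. Then $\sum_i q_i\ge\sum_i x_i-\sum_i x_i1_{[0,s)}(x_i)$, and the coverage defect is controlled by $\tau(1-e)+\tau(1_{[0,\gamma]}(S_m))$ plus $O(\gamma)$ terms. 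I would use the operator monotonicity of the square root and the elementary inequality $(1-\sqrt{t})_+^2\le(1-t)_+$.

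I expect the main obstacle to be this coverage step. The sum $\sum_i x_i1_{[0,s)}(x_i)$ can have total trace as large as $s$ times the number of indices, which is not bounded. One needs instead that the small spectral parts of the $x_i$ have small total mass weighted against $\sum_i x_i\le1$. I would first try the estimate $x_i1_{[0,s)}(x_i)\le\sqrt{s}\,x_i^{1/2}$ combined with the trace bound. If that fails, I would fall back on using $\tau(S_{i-1}^2 q_i)$-type near-orthogonality to compare $\sum_i q_i$ with $1_{(\gamma,\infty)}(S_m)\vee e$ directly.
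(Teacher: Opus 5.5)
Your construction is the paper's. Your $x_i=\lambda(R_{i-1}-R_i)$ is exactly its $D_i$, the threshold comes from the family coarea bound of Lemma~\ref{pre:coarea} on $[\gamma,2\gamma]$, near-orthogonality comes from $q_i\le s^{-1}x_i$, and $1-e$ is prepended in Lemma~\ref{phy:resolvents}. Your rewrite $R_i=R_{i-1}-R_{i-1}f_iR_i$ gives $f_iR_i=H_i^{-1}f_iR_{i-1}$ with $H_i=1+f_iR_{i-1}f_i$, so $x_i=X_iX_i^*$ with the contraction $X_i=\sqrt\lambda R_{i-1}f_iH_i^{-1/2}$. This is how the paper gets $\sum_i\mathcal E_u(x_i)\le C\lambda^{-4}\gamma^{16}$. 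Also $H_i\ge1$ gives $x_i\le\lambda R_{i-1}f_iR_{i-1}$, hence $\tau(S_{i-1}^2x_i)\le\lambda\tau(f_i)$ with no cross terms to absorb.

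\textbf{The gap: the coverage step.} You leave it open, and the obstacle you name there is not real. The missing observation is the rank bound you already recorded. Since $x_i$ and $q_i$ commute, $(x_i-q_i)_+=x_i1_{[0,s)}(x_i)\le s\,\supp x_i$. Hence
\[
 \sum_i\tau\bigl((x_i-q_i)_+\bigr)\le s\sum_i\frac{\rank x_i}{d}\le2\gamma\sum_i\tau(f_i)\le6\gamma ,
\]
which is independent of the number of indices. Then use $(1-\sqrt t)_+^2\le(1-t)_+$, subadditivity of $X\mapsto\tau(X_+)$, and $\sum_ix_i\le1$:
\[
 \delta(\boldsymbol q)\le\tau\Bigl(1-\sum_ix_i\Bigr)+\sum_i\tau\bigl((x_i-q_i)_+\bigr).
\]
Handle the first term by trace linearity, not pointwise. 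Since $1-\lambda R_0=(1+\lambda)^{-1}(1-e)$,
\[
 \tau\Bigl(1-\sum_ix_i\Bigr)=\tau(1-\lambda R_0)+\tau(\lambda R_m)\le\tau(1-e)+\tau(\lambda R_m).
\]
Finish with the scalar bound $\lambda/(\lambda+x)\le1_{[0,\gamma]}(x)+\gamma$. Your claim about ``the range of $e$ and spectral subspaces where $S_m\ge\gamma$'' does not make sense as an operator statement, because $e$ and $S_m$ need not commute. Your fallback $x_i1_{[0,s)}(x_i)\le\sqrt s\,x_i^{1/2}$ needs the same rank bound to control $\sum_i\tau(x_i^{1/2})$. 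Even then it only gives $O(\sqrt\gamma)$, not the required $7\gamma$.

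\textbf{The choice of $\lambda$.} It is not free, and it goes the opposite way from what you suggest. The scalar inequality above and the bound $3\lambda/\gamma\le3\gamma$ both need $\lambda\le\gamma^2$. The coarea step turns $C\lambda^{-4}\gamma^{16}$ into $C\gamma^{-3/2}\lambda^{-2}\gamma^{8}$, which is $O(\gamma^2)$ only if $\lambda\gtrsim\gamma^{9/4}$. So $\lambda$ must not be too small a power of $\gamma$, and the window is essentially $\lambda=\gamma^2$, the paper's choice. Your guess $\lambda=\gamma^4$ makes the energy bound of order $\gamma^{-3/2}$.
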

\begin{proof}
Put $\lambda=\gamma^2$ and define
\[
 R_i=(\lambda+S_i)^{-1},\qquad H_i=1+f_iR_{i-1}f_i,
 \qquad X_i=\sqrt\lambda R_{i-1}f_iH_i^{-1/2}.
\]
The resolvent identity gives positive contractions
\begin{equation}\label{phy:resolvent-increments}
 D_i=\lambda(R_{i-1}-R_i)=X_iX_i^*.
\end{equation}
Thus $\rank D_i\le\rank f_i$ and
\begin{equation}\label{phy:resolvent-telescope}
 \sum_i D_i=\lambda(R_0-R_m)\le1.
\end{equation}
Also $D_i\le\lambda R_{i-1}f_iR_{i-1}$ and
$0\le S_{i-1}R_{i-1}\le1$, so
\begin{equation}\label{phy:resolvent-moment}
 \sum_i\tau(S_{i-1}^2D_i)
 \le\lambda\sum_i\tau(f_i)\le3\lambda.
\end{equation}

Prepend $1-e$ to the input family in Lemma~\ref{phy:resolvents}.
The Hilbert--Schmidt Lipschitz estimate gives, for every unitary $u$,
\[
 \|[u,H_i^{-1/2}]\|_2
 \le\lambda^{-1}\|[u,f_i]\|_2
                     +\tfrac12\|[u,R_{i-1}]f_i\|_2.
\]
Since $\|R_{i-1}\|\le\lambda^{-1}$ and $\|H_i^{-1/2}\|\le1$,
the product rule gives
\[
 \|[u,X_i]\|_2
 \le C\bigl(\lambda^{-3/2}\|[u,f_i]\|_2
              +\lambda^{-1/2}\|[u,R_{i-1}]f_i\|_2\bigr).
\]
After summing, Lemma~\ref{phy:resolvents} and $\|X_i\|\le1$ imply
\[
 \sum_i\mathcal E_u(D_i)
 \le4\sum_i\mathcal E_u(X_i)\le C\lambda^{-4}\gamma^{16}.
\]
Their total trace is at most one by \eqref{phy:resolvent-telescope}.
Lemma~\ref{pre:coarea} on $[\gamma,2\gamma]$ therefore supplies a
common $t$; define $\boldsymbol q=(q_i)_i$ by
$q_i=1_{[t,1]}(D_i)$. Then
\[
 \sum_i\mathcal E_u(q_i)
 \le C\gamma^{-3/2}\lambda^{-2}\gamma^8
 \le C\gamma^2.
\]
The rank assertion is immediate, and $q_i\le\gamma^{-1}D_i$
gives the last bound in \eqref{phy:resolvent-projections}.
Finally, $\tau((D_i-q_i)_+)\le2\gamma\tau(f_i)$.
Using $(1-\sqrt t)_+^2\le(1-t)_+$ and trace subadditivity of
the positive part,
\[
 \delta(\boldsymbol q)
 \le\tau\Bigl(1-\sum_i D_i\Bigr)
                 +\sum_i\tau((D_i-q_i)_+)
 \le\tau(1-e)+\tau(\lambda R_m)+6\gamma.
\]
Since $\lambda=\gamma^2$, the scalar inequality
$\lambda/(\lambda+x)\le1_{[0,\gamma]}(x)+\gamma$ for $x\ge0$
proves \eqref{phy:resolvent-covering}.
\end{proof}

\subsection{Correcting a covering family to an orthogonal one}

Let $H$ and $K$ be finite-dimensional Hilbert spaces, put
$d=\dim H$, and let $u=(u_1,\ldots,u_h)\in U(H)^h$ and
$v=(v_1,\ldots,v_h)\in U(K)^h$. For $Y\in B(H,K)$, write
\[
 \mathcal I_{u,v}(Y)=\frac1{4h}\sum_j\|v_jY-Yu_j\|_2^2.
\]

The next lemma replaces a partial isometry whose range projection almost commutes with a projection partition with one whose range projection exactly commutes with it and estimates the relevant intertwining defects.

\begin{lemma}\label{pov:selection}
Let $H=\mathbb C^d$, $\dim K\le3d$, and let $u_j\in U(H)$,
$v_j\in U(K)$, $1\le j\le h$. Let $(E_i)_i$ be a projection
partition of $K$ commuting with every $v_j$.
If $Y:H\to K$ is a partial isometry, then, for every $t\ge1$,
there is a partial isometry $Z:H\to K$ such that
\begin{equation}\label{pov:finite-selection}
 Z^*Z=Y^*Y,\qquad [ZZ^*,E_i]=0,\qquad
 \mathcal I_{u,v}(Z)\le
 2e^{4t}\mathcal I_{u,v}(Y)+32t^{-1/2}.
\end{equation}
\end{lemma}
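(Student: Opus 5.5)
We would reduce the statement to orthogonalizing a positive operator valued measure on the source space. Put $Q=Y^*Y$ and $A_i=Y^*E_iY$. These are positive contractions with $\sum_iA_i=Q$. Because each $E_i$ commutes with every $v_j$, the family $(A_i)$ is almost $u$-invariant: expanding $u_j^*A_iu_j-A_i$ in terms of $v_jY-Yu_j$ should give $\sum_i\frac1{4h}\sum_j\|[u_j,A_i]\|_2^2\lesssim\mathcal I_{u,v}(Y)$, with the sum over $i$ controlled by orthogonality of the $E_i$.

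The key observation is the following. A partial isometry $Z$ satisfies $[ZZ^*,E_i]=0$ for all $i$ and $Z^*Z=Q$ exactly when $Z=\sum_iZ_i$, where $Z_i=E_iZ$ has source $Q_i$, range under $E_i$, and the $Q_i$ form an orthogonal partition of $Q$. So it suffices to produce an orthogonal projection partition $(Q_i)$ of $Q$ "close" to $(A_i)$. We would then set $Z_i$ to be the partial isometry in the polar decomposition of $E_iYQ_i$, completed on the kernel where needed (here $\dim K\le3d$ is used), and $Z=\sum_iZ_i$.

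For the orthogonalization we would use a random threshold, as in the proof of Lemma~\ref{pre:coarea}. We do not threshold each $A_i$ separately; instead we encode all of them into a single self-adjoint operator. One option is to consider the exponentially tilted operators $B_i=Qe^{tA_i}Q$ (or $e^{tA_i}$ normalized by $\sum_ke^{tA_k}$) and select, via spectral projections of $X=\sum_i\theta_iA_i$ with random separated labels $\theta_i$, the index $i$ whose weight dominates. The tilt parameter $t$ makes the dominant index almost certain: the mass not captured by a dominant index decays like $t^{-1/2}$. The Lipschitz constant of the exponential on $[0,1]$ is at most $e^{t}$, and squaring together with the product rule produce the factor $e^{4t}$ in front of $\mathcal I_{u,v}(Y)$. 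The commutator energy of the resulting spectral projections is then bounded by averaging over the random labels and thresholds, using the Hilbert--Schmidt Lipschitz estimate \eqref{pre:hs-lipschitz} and the coarea inequality of Lemma~\ref{pre:coarea}.

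Finally we would estimate $\mathcal I_{u,v}(Z)$ by splitting
\[
 vZ-Zu=v(Z-Y)+(vY-Yu)+(Y-Z)u,
\]
but not in operator norm, since $Z-Y$ need not be small. Rather, we compare $Z_i$ with $E_iY$ on $Q_i$. We write $E_iYQ_i=Z_i|E_iYQ_i|$ and use the Araki--Yamagami inequality \eqref{pre:absolute-value} to control the difference between $|E_iYQ_i|=(Q_iA_iQ_i)^{1/2}$ and $Q_i$. This difference is small exactly when $Q_iA_iQ_i$ is close to $Q_i$, i.e. on the part where index $i$ dominates, and the failure mass is the $t^{-1/2}$ term. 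The main obstacle will be this last step: showing that the intertwining defect of the polar parts $Z_i$, summed over $i$, is controlled by $e^{4t}\mathcal I_{u,v}(Y)$ plus the non-dominance mass. Polar decomposition is not Lipschitz near small singular values, so we would first discard the spectral part of $Q_iA_iQ_i$ below a threshold ($1/2$, say). We would then account for the discarded part inside the $32t^{-1/2}$ term, reassigning it arbitrarily, since its total trace is small and each unit of mass costs at most a bounded amount of defect.
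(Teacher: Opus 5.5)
Your preliminary steps are fine. Expanding $u_j^*A_iu_j-A_i$ with $\Delta_j=v_jY-Yu_j$ and $[E_i,v_j]=0$ gives $\sum_i\mathcal E_u(A_i)\le16\,\mathcal I_{u,v}(Y)$. A partial isometry with $Z^*Z=Y^*Y$ and block-diagonal range is indeed a sum of partial isometries into the $E_iK$ whose orthogonal sources $Q_i$ sum to $Q$.

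The gap is the final comparison of $Z_i$ with $E_iYQ_i$. You claim that $|E_iYQ_i|=(Q_iA_iQ_i)^{1/2}$ is close to $Q_i$ outside a mass $O(t^{-1/2})$. But this discrepancy depends only on $Y$ and the chosen partition, and your construction never changes $Y$. The parameter $t$ enters only through source-side operations such as $e^{tA_i}$, which cannot affect it.

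For instance, take $K=H^{\oplus3}$ with coordinate blocks $E_i$, $v_j=u_j^{\oplus3}$ and $Y\xi=3^{-1/2}(\xi,\xi,\xi)$. This satisfies $\dim K\le3d$, and $\mathcal I_{u,v}(Y)=0$, $A_i=\tfrac13$. Hence $Q_iA_iQ_i=Q_i/3$ for \emph{every} orthogonal partition. Your threshold at $1/2$ then discards all of $Q$, and the bounded-cost reassignment gives a defect of order $1$ rather than $32t^{-1/2}$. Equivalently, $\sum_i\|Q_i-|E_iYQ_i|\|_2^2=(1-3^{-1/2})^2$ for every choice of partition. The lemma is easy in this example, but your accounting cannot prove it, because nothing in your construction makes the $A_i$ close to a projection-valued measure.

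The selection mechanism has the same defect. The normalized family $e^{tA_i}/\sum_ke^{tA_k}$ equals $\tfrac13$ for each $i$ here, so it breaks no ties. Spectral projections of $X=\sum_i\theta_iA_i$ also fail, since a weighted average of labels can sit near $\theta_i$ while $A_i=0$; no dominant index is detected.

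What is missing is a deformation of $Y$ on the target side by operators commuting with the $v_j$. This is what converts $t$ into near-orthogonality. The paper takes independent $s_i$ uniform on $[-1,1]$, sets $A=\sum_is_iE_i$, and lets $Y_A$ be the polar part of $e^{tA}Y$. Since $e^{tA}$ commutes with every $v_j$ and the nonzero singular values of $e^{tA}Y$ lie in $[e^{-t},e^t]$, one gets $\mathcal I_{u,v}(Y_A)\le e^{4t}\mathcal I_{u,v}(Y)$. This, not a Lipschitz bound for $x\mapsto e^{tx}$, is the source of $e^{4t}$.

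On the source side, this replaces $A_i$ by the renormalized tilted family $Y_A^*E_iY_A=S^{-1/2}e^{2ts_i}A_iS^{-1/2}$, where $S=\sum_ke^{2ts_k}A_k$; the random weights break ties. Quantitatively, put $p_A=Y_AY_A^*$ and $m_i=\tau(E_ip_A)-\tau((E_ip_AE_i)^2)$. The identity $\partial_{s_i}\tau(E_ip_A)=2tm_i$ and integration by parts for the uniform law give $t\,\mathbb E\sum_i(1-s_i^2)m_i=\mathbb E\tau(Ap_A)\le1$. Hence $\mathbb E\|p_A-\sum_iE_ip_AE_i\|_2^2\le4t^{-1/2}$. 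This is where $\sum_i\tau(E_i)\le3$, i.e. $\dim K\le3d$, is used; completing polar parts needs only $\operatorname{rank}Y\le\dim K$.

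Finally, round $p_A$ to a block-diagonal projection $e$ of equal rank and complete the polar part of $eY_A$. The resulting $Z$ satisfies $\|Z-Y_A\|_2^2\le16t^{-1/2}$, and the bound follows from $\mathcal I_{u,v}(X)\le\|X\|_2^2$. The comparison is with $Y_A$, never with $Y$.
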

\begin{proof}
Choose independent $s_i$ uniformly on $[-1,1]$, put
$A=\sum_i s_iE_i$, and let $Y_A$ be the polar part of $e^{tA}Y$.
Its initial projection is $Y^*Y$; write $p_A=Y_AY_A^*$. Set
$B=\bigoplus_i E_iB(K)E_i$, and let $E_B:B(K)\to B$ be the
trace-preserving conditional expectation. Explicitly,
\[
 E_B(x)=\sum_i E_ixE_i.
\]
The nonzero singular values of $e^{tA}Y$ lie in $[e^{-t},e^t]$.
The polar map is $b^{-1}$-Lipschitz in Hilbert--Schmidt norm on
matrices whose nonzero singular values are at least $b$:
apply the Hilbert--Schmidt Lipschitz estimate to the clipped sign
function on their self-adjoint dilations.
Since $A$ commutes with every $v_j$, it follows that
\begin{equation}\label{pov:scaled-energy}
 \mathcal I_{u,v}(Y_A)\le e^{4t}\mathcal I_{u,v}(Y).
\end{equation}

Put
$m_i=\tau(E_ip_A)-\tau((E_ip_AE_i)^2)\ge0$.
Differentiating the projection onto $e^{tA}YH$ gives
\[
 \partial_{s_i}p_A
 =t\bigl((1-p_A)E_ip_A+p_AE_i(1-p_A)\bigr),\qquad
 \partial_{s_i}\tau(E_ip_A)=2tm_i.
\]
Integration by parts for the uniform distribution,
$\mathbb E[af(a)]=\frac12\mathbb E[(1-a^2)f'(a)]$, therefore yields
\[
 t\,\mathbb E\sum_i(1-s_i^2)m_i
 =\mathbb E\tau(Ap_A)\le\tau(Y^*Y)\le1.
\]
For $0<\varepsilon\le1$, the terms with $|s_i|\le1-\varepsilon$
have weight $1-s_i^2\ge\varepsilon$. The other terms have expected
sum at most $\varepsilon\sum_i\tau(E_i)\le3\varepsilon$, since
$m_i\le\tau(E_i)$. Thus
\[
 \mathbb E\|p_A-E_B(p_A)\|_2^2
 =\mathbb E\sum_i m_i
 \le(t\varepsilon)^{-1}+3\varepsilon.
\]
Taking $\varepsilon=t^{-1/2}$, choose $A$ with
$D:=\|p_A-E_B(p_A)\|_2^2\le4t^{-1/2}$.

Set $a=E_B(p_A)$ and $e_0=1_{[1/2,1]}(a)$.
Since $\tau(a-a^2)=D$,
$\|p_A-e_0\|_2^2\le2D$ and
$|\tau(e_0)-\tau(p_A)|\le2D$.
Modify $e_0$ inside the blocks by adding or deleting the rank
difference, and denote the resulting block-diagonal projection by $e$.
Then $e$ and $p_A$ have the same rank.
The trace formula for projection distance gives
$\|e-p_A\|_2^2\le4D$.
Complete the polar part of $eY_A$ to an isometry from $Y^*YH$ onto
$eK$, and denote the resulting partial isometry by $Z$.
If $c_k\in[0,1]$ are the singular values of $eY_A$ on $Y^*YH$,
then
\[
 \|Z-Y_A\|_2^2
 =\frac2d\sum_k(1-c_k)
 \le\frac2d\sum_k(1-c_k^2)
 =\|e-p_A\|_2^2\le16t^{-1/2}.
\]
Now $ZZ^*=e$ and $Z^*Z=Y^*Y$.
Using $\mathcal I_{u,v}(X)\le\|X\|_2^2$ and
\eqref{pov:scaled-energy} proves \eqref{pov:finite-selection}.
\end{proof}

The next proposition is one of the main technical preparatory results. Informally, it says that we can replace a covering projection family $\boldsymbol q$ with small boundary energy with a subequivalent orthogonal one, so that the partial isometries realizing the subequivalence still have small boundary energy -- which also ensures small boundary for the new orthogonal family.

\begin{proposition}
\label{pov:rounding}
Let $u_1,\ldots,u_h\in U(d)$ and let $\boldsymbol q=(q_i)_i$
be projections with $\sum_i\tau(q_i)\le3$ and
$\sum_i\mathcal E_u(q_i)\le s$.
There are partial isometries $V_i$ with mutually orthogonal initial
projections $P_i=V_i^*V_i$ and target projections $Q_i=V_iV_i^*$ such that
$Q_i\le q_i$ and, for an absolute constant $C$,
\begin{equation}\label{pov:prescribed-ranges}
 \sum_i\mathcal E_u(V_i)
 \le C(\log(1/s))^{-1/2},\qquad
 \sum_i\mathcal E_u(P_i)
 \le C(\log(1/s))^{-1/2}.
\end{equation}
Moreover,
\begin{equation}\label{pov:defect-increase}
 \delta(\boldsymbol P)-\delta(\boldsymbol q)
 \le C(\log(1/s))^{-1/2}.
\end{equation}
\end{proposition}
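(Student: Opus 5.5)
The plan is to encode the whole family $\boldsymbol q$ in a single partial isometry from $H=\mathbb C^d$ into an auxiliary direct sum. We then apply Lemma~\ref{pov:selection} to make its range projection block diagonal, and read off $V_i$, $P_i$, $Q_i$ from the blocks.

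\emph{Setup.} Put $K=\bigoplus_i q_iH$ with block projections $E_i$. Then $\dim K=d\sum_i\tau(q_i)\le3d$, as Lemma~\ref{pov:selection} requires. On $K$ take $v_j=\bigoplus_i w_{j,i}$, where $w_{j,i}$ is a unitary on $q_iH$ obtained from the polar part of $q_iu_jq_i$, completed on its kernel. These $v_j$ commute with every $E_i$. Since $\|q_iu_jq_i-u_jq_i\|_2\le\|[u_j,q_i]\|_2$, the replacement of $u_j$ by $v_j$ should cost $O(\sum_i\mathcal E_u(q_i))^{1/2}=O(s^{1/2})$ in all intertwining defects, possibly after a Hilbert--Schmidt Lipschitz argument on small singular values. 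The column map $T\xi=(q_i\xi)_i$ satisfies $T^*T=S:=\sum_iq_i$ and $\mathcal I_{u,v}(T)\lesssim s$.

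\emph{Cut the spectrum and build $Y$.} The polar part of $T$ is not controlled near small eigenvalues of $S$. So, for a parameter $\gamma=s^{c}$ with a small absolute $c$, Lemma~\ref{pre:coarea} applied to $S$ on $[\gamma,2\gamma]$ gives a cut $g=1_{[\sigma,\infty)}(S)$ with $\mathcal E_u(g)\lesssim\gamma^{-1}s^{1/2}$. Set $Y$ to be the polar part of $Tg$. Its nonzero singular values are at least $\sqrt\gamma$, so the polar map is $\gamma^{-1/2}$-Lipschitz, and we get $\mathcal I_{u,v}(Y)\lesssim\gamma^{-1}(s+\mathcal E_u(g))\le s^{c'}$. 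Moreover $Y^*Y=g$, and
\[
 \tau(1-g)\le\tau(1_{[0,2\gamma)}(S))\le(1-\sqrt{2\gamma})^{-2}\,\delta(\boldsymbol q),
\]
which gives $\tau(1-g)\le\delta(\boldsymbol q)+O(\sqrt\gamma)$ because $\delta\le1$.

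\emph{Selection and readout.} Apply Lemma~\ref{pov:selection} with $t=c''\log(1/s)$, choosing $c''$ so that $e^{4t}s^{c'}\le s^{c'/2}$. This yields $Z$ with $Z^*Z=g$, $[ZZ^*,E_i]=0$ and $\mathcal I_{u,v}(Z)\le C(\log(1/s))^{-1/2}$. Define $P_i=Z^*E_iZ$. Because $ZZ^*$ commutes with $E_i$, each $P_i$ is a projection, and they are mutually orthogonal with $\sum_iP_i=g$. Define $V_i=E_iZP_i$, viewed as a map into $q_iH\subset H$. It is a partial isometry with initial projection $P_i$ and target $Q_i=E_iZZ^*E_i\le q_i$. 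Since $v_j$ commutes with $E_i$, we have $\sum_i\mathcal I_{u,v}(E_iZ)=\mathcal I_{u,v}(Z)$. Replacing $v_j$ by $u_j$ costs $O(s^{1/2})$, so $\sum_i\mathcal E_u(E_iZ)$ is small. Then $\mathcal E_u(P_i)\le4\mathcal E_u(E_iZ)$ and $\mathcal E_u(V_i)\lesssim\mathcal E_u(E_iZ)$ give \eqref{pov:prescribed-ranges}. Finally $\delta(\boldsymbol P)=\tau(1-g)$ gives \eqref{pov:defect-increase}.

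\emph{Main obstacle.} The delicate step is the comparison between $u$ and the block unitaries $v$ on $K$. The compressions $q_iu_jq_i$ are only approximately unitary, so all defects must be transported with losses polynomial in $s$. The logarithm in the statement, via $t\sim\log(1/s)$, absorbs the $e^{4t}$ factor only if every such loss is genuinely polynomial in $s$. If the polar-part construction for $w_{j,i}$ fails to be polynomial, I would instead take $K=\bigoplus_iH$ with $v_j=u_j\oplus\cdots\oplus u_j$. Since $Y$ still has range in $\bigoplus_iq_iH$, I would then verify that the proof of Lemma~\ref{pov:selection} uses $\dim K$ only through the trace bound $\sum_i\tau(E_ip_A)\le3$.
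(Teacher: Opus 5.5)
Your route is the paper's: the same auxiliary space $K=\bigoplus_i q_iH$ with completed polar parts, the column operator, a spectral cut producing $Y$, Lemma~\ref{pov:selection}, and the blockwise readout $V_i=E_iZ$. One step does not go through as written, however. You treat the column $T$, equivalently $S=T^*T$, as if it were bounded, but only its trace is controlled. Suppose $\boldsymbol q$ consists of $N$ copies of one projection $q$ with $\tau(q)=3/N$. Then $\|T\|^2=\|S\|=N$ and $\mathcal E_u(S)=N\sum_i\mathcal E_u(q_i)$. So Lemma~\ref{pre:coarea} applied to $S$ does not give $\mathcal E_u(g)\lesssim\gamma^{-1}s^{1/2}$ uniformly in the family. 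Likewise, your Lipschitz step only yields $\|w_jTg-Tgu_j\|_2\le\|w_jT-Tu_j\|_2+\|T\|\,\|[u_j,g]\|_2$, where $w_j=\bigoplus_i w_{j,i}$. Your bound on $\mathcal I_{u,v}(Y)$ therefore carries the uncontrolled factor $\|T\|$.

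The missing step is the one the paper inserts before cutting: clip the singular values of $T$ at one. Clipping is $1$-Lipschitz and commutes with unitary multiplication on either side. Hence the clipped column $C$ satisfies $\mathcal I_{u,v}(C)\le s$, $\|C\|\le1$ and $|C|=\min\{S^{1/2},1\}$. Since $|w_jC|=|C|$ and $|Cu_j|=u_j^*|C|u_j$, the Araki--Yamagami inequality gives $\mathcal E_u(|C|)\le2s$. For $\sigma\le1$ your cut is $g=1_{[\sqrt\sigma,1]}(|C|)$, and the polar part of $Tg$ coincides with that of $Cg$. Apply Lemma~\ref{pre:coarea} to $|C|$ instead of $S$, and run the Lipschitz step with $C$ in place of $T$. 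Both of your estimates then hold up to harmless changes of exponents, and the losses remain polynomial in $s$, which is all the choice $t\sim\log(1/s)$ needs.

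The paper shortens this further by running the coarea argument directly on the self-adjoint dilation of $C$ with $\chi_b(x)=\operatorname{sgn}(x)1_{[b,1]}(|x|)$. That gives $\int_0^1\mathcal I_{u,v}(Y_b)\,db\le2\sqrt{\mathcal I_{u,v}(C)}$ with no separate energy bound for $g$ and no polar Lipschitz estimate.

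Finally, the obstacle you single out is not one. The singular values of $q_iu_jq_i$ on $q_iH$ lie in $[0,1]$, where $(1-c)^2\le1-c^2$. Hence the unitary completion satisfies $\|w_{j,i}-q_iu_jq_i\|_2^2\le\|(1-q_i)u_jq_i\|_2^2$. It follows that both $\|w_{j,i}q_i-q_iu_j\|_2^2$ and $\|u_jq_i-w_{j,i}q_i\|_2^2$ are at most $\|[u_j,q_i]\|_2^2$. The loss is linear, so your fallback $K=\bigoplus_iH$, which would violate $\dim K\le3d$, is unnecessary.
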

\begin{proof}
If $s=0$, form successive joins of the $q_i$ inside
$\{u_1,\ldots,u_h\}'$. Their orthogonal differences $P_i$ are
the initial supports of the polar parts of $q_iP_i$.
These polar parts have zero energy and
$\sum_iP_i=\supp\sum_i q_i$, which proves the assertion.
Assume $s>0$.

For a projection $q$ and a unitary $u$, complete the polar part of
$quq$ to a unitary $v$ on $qH$.
Since its singular values lie in $[0,1]$,
$\|v-quq\|_2^2\le\|(1-q)uq\|_2^2$.
Splitting source or target into $qH$ and $(1-q)H$ gives
\begin{equation}\label{eq:source-p3}
 \|vq-qu\|_2^2\le\|[u,q]\|_2^2,\qquad
 \|uq-vq\|_2^2\le\|[u,q]\|_2^2.
\end{equation}
Make these completions $v_{j,i}$ for $q_iu_jq_i$, and set
$K=\bigoplus_i q_iH$, $v_j=\bigoplus_i v_{j,i}$.
The column $F:H\to K$, $F\xi=(q_i\xi)_i$, satisfies
$\mathcal I_{u,v}(F)\le s$.
Clip its singular values at one to obtain $C$.
Clipping is contractive and unitarily equivariant, so
\[
 \mathcal I_{u,v}(C)\le s,\qquad
 |C|=\min\left\{(\sum_i q_i)^{1/2},1\right\},\qquad
 \tau((1-|C|)^2)=\delta(\boldsymbol q).
\]

For $b>0$ let $Y_b$ be the polar part of $C$ restricted to
$1_{[b,1]}(|C|)H$.
We claim
\begin{equation}\label{pov:polar-coarea}
 \int_0^1\mathcal I_{u,v}(Y_b)\,db
 \le2\sqrt{\mathcal I_{u,v}(C)}.
\end{equation}
Indeed, on the self-adjoint dilation of $C$, use
$\chi_b(x)=\operatorname{sgn}(x)1_{[b,1]}(|x|)$ and the scalar bound
$\int_0^1|\chi_b(x)-\chi_b(y)|^2\,db\le2|x-y|$.
The entrywise Cauchy--Schwarz argument of Lemma~\ref{pre:coarea},
with total trace $(d+\dim K)/d\le4$, gives
\eqref{pov:polar-coarea}; the two off-diagonal corners have equal
Hilbert--Schmidt norm.
Consequently, for every $0<a\le1/4$, some $b\in[a,2a]$ gives
a partial isometry $Y=Y_b$ with
\[
 \mathcal I_{u,v}(Y)\le2\sqrt{s}/a,\qquad
 \tau(1-Y^*Y)\le\delta(\boldsymbol q)+4a.
\]
The second inequality follows from
$1_{[0,b)}(x)\le(1-x)^2+2b$ on $[0,1]$.

Apply Lemma~\ref{pov:selection} with any $t\ge1$.
Let $E_i$ be the label projections on $K$ and $J_i:q_iH\to H$
the inclusions, and put $V_i=J_iE_iZ$.
Block diagonality of $ZZ^*$ gives orthogonal initial projections
with sum $Y^*Y$ and target projections $Q_i\le q_i$.
The product rule and \eqref{eq:source-p3} give
$\sum_i\mathcal E_u(V_i)\le2s+2\mathcal I_{u,v}(Z)$.
Also,
$\delta(\boldsymbol P)=\tau(1-Y^*Y)
 \le\delta(\boldsymbol q)+4a$.
Consequently,
\begin{equation}\label{pov:orthogonalization-free-scales}
\begin{aligned}
 \sum_i\mathcal E_u(V_i)
 &\le2s+8e^{4t}\sqrt{s}/a+64t^{-1/2},\\
 \delta(\boldsymbol P)-\delta(\boldsymbol q)&\le4a.
\end{aligned}
\end{equation}
Take $t=\frac1{16}\log(1/s)$ and $a=s^{1/8}$.
For sufficiently small $s$, these choices are admissible and
both bounds in \eqref{pov:orthogonalization-free-scales} are
$O((\log(1/s))^{-1/2})$.
Expanding the commutators gives
$\mathcal E_u(V_i^*V_i)\le4\mathcal E_u(V_i)$.
\end{proof}

% \Needspace{8\baselineskip}

We now distill the results of this section into one proposition which will be used in the proof of the spectral gap theorem. There it will ensure that we can correct an inductively constructed almost invariant family $\boldsymbol f$ with small boundary energy to an orthogonal one.

\begin{proposition}\label{phy:summary}
Let $u=(u_1,\ldots,u_h)\in U(d)^h$ and let
$e,f_1,\ldots,f_m\in M_d$ be projections.
Put $S_0=1-e$ and $S_i=S_0+\sum_{j\le i}f_j$.
For sufficiently small $\eta>0$, suppose
\[
 \begin{gathered}
 \tau(1-e)+\sum_i\tau(f_i)\le3,\qquad
 \mathcal E_u(1-e)+\sum_i\mathcal E_u(f_i)\le e^{-\eta^{-4}},\\
 \tau(1-e)+\tau(1_{[0,\eta]}(S_m))\le\eta.
 \end{gathered}
\]
There are partial isometries $V_i\in M_d$ with mutually orthogonal
initial projections $P_i=V_i^*V_i$ and target projections $Q_i=V_iV_i^*$
such that
\begin{equation}\label{phy:partial-isometry-estimates}
 \rank P_i\le\rank f_i,\qquad
 \sum_i\mathcal E_u(V_i)=O(\eta^2),\qquad
 \sum_i\tau(S_{i-1}^2Q_i)\le3\eta,
\end{equation}
and, for $P_0=1-\sum_iP_i$,
\begin{equation}\label{phy:orthogonal-covering}
 \tau(P_0)=O(\eta).
\end{equation}
\end{proposition}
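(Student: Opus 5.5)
The plan is to chain Lemma~\ref{phy:covariance} and Proposition~\ref{pov:rounding}, with a scale $\gamma$ much smaller than $\eta$. A plain choice $\gamma=\eta$ fails. The rounding step loses only $(\log(1/s))^{-1/2}$, where $s$ is the energy of the intermediate family $\boldsymbol q$. To make this loss $O(\eta^2)$ we need $s\approx e^{-c\eta^{-4}}$, so the covariance lemma must be run at an exponentially small scale. Concretely, I would take
\[
 \gamma=e^{-\eta^{-4}/16}.
\]
Then $\gamma^{16}=e^{-\eta^{-4}}$, so the energy hypothesis of Lemma~\ref{phy:covariance} is exactly the assumed one. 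For small $\eta$ we also have $\gamma\le\eta\le1/4$. The trace hypothesis $\tau(1-e)+\sum_i\tau(f_i)\le3$ is assumed directly.

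\textbf{Step 1 (covariance lemma).} Lemma~\ref{phy:covariance} gives projections $q_i$ with the following properties.
\begin{itemize}
\item $\rank q_i\le\rank f_i$. Hence $\tau(q_i)\le\tau(f_i)$ and $\sum_i\tau(q_i)\le3$.
\item $\sum_i\mathcal E_u(q_i)\le C\gamma^2=:s$.
\item $\sum_i\tau(S_{i-1}^2q_i)\le3\gamma\le3\eta$.
\item Since $\gamma\le\eta$ gives $1_{[0,\gamma]}(S_m)\le1_{[0,\eta]}(S_m)$, the coverage bound becomes
\[
 \delta(\boldsymbol q)\le\tau(1-e)+\tau(1_{[0,\eta]}(S_m))+7\gamma\le\eta+7\gamma=O(\eta).
\]
\end{itemize}

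\textbf{Step 2 (rounding).} Apply Proposition~\ref{pov:rounding} to $\boldsymbol q$ with this $s$. For small $\eta$, $s$ is small enough for that proposition, and
\[
 \log(1/s)=\eta^{-4}/8-\log C\ge\eta^{-4}/16 .
\]
Hence $C(\log(1/s))^{-1/2}=O(\eta^2)$. The proposition yields partial isometries $V_i$ with mutually orthogonal initial projections $P_i$, targets $Q_i\le q_i$, and $\sum_i\mathcal E_u(V_i)=O(\eta^2)$. This is the energy estimate in \eqref{phy:partial-isometry-estimates}.

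\textbf{Step 3 (remaining bounds).}
\begin{itemize}
\item Ranks: $\rank P_i=\rank Q_i\le\rank q_i\le\rank f_i$.
\item Weighted trace: $Q_i\le q_i$ gives $\tau(S_{i-1}^2Q_i)=\tau(S_{i-1}Q_iS_{i-1})\le\tau(S_{i-1}q_iS_{i-1})$. Summing and using Step~1 gives the bound $3\eta$.
\item Coverage: the $P_i$ are orthogonal, so $\delta(\boldsymbol P)=\tau(1-\sum_iP_i)=\tau(P_0)$. By \eqref{pov:defect-increase},
\[
 \tau(P_0)\le\delta(\boldsymbol q)+O(\eta^2)=O(\eta),
\]
which is \eqref{phy:orthogonal-covering}.
\end{itemize}

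The only delicate point is the choice of scales. The loss in Proposition~\ref{pov:rounding} is only logarithmic, so the doubly exponential hypothesis $e^{-\eta^{-4}}$ has to be converted through $\gamma$. Apart from that, I would only check that each "sufficiently small" requirement, in Lemma~\ref{phy:covariance} and in Proposition~\ref{pov:rounding}, holds once $\eta$ is small.
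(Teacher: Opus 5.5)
Your proposal is correct and follows the paper's proof essentially verbatim: the paper also applies Lemma~\ref{phy:covariance} at the scale $\gamma=e^{-\eta^{-4}/16}$, and uses $\gamma\le\eta$ with monotonicity of the low spectral projections of $S_m$ to get $\delta(\boldsymbol q)=O(\eta)$. It then invokes Proposition~\ref{pov:rounding} with $s=O(e^{-\eta^{-4}/8})$ and passes the rank and weighted-trace bounds to $Q_i\le q_i$. Your explicit checks, namely that $\sum_i\tau(q_i)\le3$ follows from the rank bound and that $\log(1/s)\ge\eta^{-4}/16$ for small $\eta$, are exactly the routine verifications the paper leaves implicit.
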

\begin{proof}
Apply Lemma~\ref{phy:covariance} with $\gamma = e^{-\eta^{-4}/16}$. The resulting projections $q_i$ have total energy
$O(e^{-\eta^{-4}/8})$ and satisfy
$\sum_i\tau(S_{i-1}^2q_i)\le3\eta$.
Since $e^{-\eta^{-4}/16}\le\eta$ for sufficiently small $\eta$,
monotonicity of the low spectral projections and the last hypothesis
give $\delta(\boldsymbol q)=O(\eta)$.
Proposition~\ref{pov:rounding} supplies $Q_i\le q_i$ with energy
and additional coverage defect $O(\eta^2)$.
The rank and weighted trace bounds pass to $Q_i$, proving the assertions.
\end{proof}

\section{Expanding decomposition with a uniform gap}
\sectionmark{Expanding decomposition with a uniform gap}
\label{phy:constructionsection}

\begin{definition}
We say that a unitary tuple
$v=(v_1,\ldots,v_k)$ on a nonzero corner $R$ has scalar gap $c$ if
\begin{equation}\label{phy:scalar-gap}
 \frac1{4k}\sum_j\|[v_j,x]\|_2^2
 \ge c\left\|x-\frac{\tau(x)}{\tau(R)}R\right\|_2^2,
 \qquad x\in RM_dR.
\end{equation}
Here the trace and the $2$-norm are those of the ambient matrix algebra.
\end{definition}
This section is devoted to the proof of the spectral gap theorem which will yield a partition of the identity $(e_i)$ such that the corrected doubled unitary tuple has scalar gap at least $\kappa^2/2^{28}$ on every corner of the partition.

Throughout this section, constants implicit in $O(\cdot)$ may depend
only on $\kappa$ and $h$. 
\subsection{From almost expansion to the spectral gap}

The next preliminary lemma corrects a projection family which has ``almost expansion'' with respect to a family of unitaries to the one with genuine spectral gap with respect to a small perturbation of doubles of these unitaries.

\begin{lemma}\label{phy:pruning}
Let $0<\kappa'<1$ and let $(P_0,\ldots,P_m)$ be a projection partition reduced by
$w_1,\ldots,w_h\in U(d)$. Suppose $\xi_i\ge0$ for $1\le i\le m$
and that, for every $1\le i\le m$ and every projection $q\le P_i$
with $\rank q\le\rank P_i/2$,
\begin{equation}\label{eq:source-g1}
 \mathcal E_w(q)\ge\kappa'\tau(q)-\xi_i.
\end{equation}
Put $\xi=\sum_{i\ge1}\xi_i$. There are $e_i\le P_i$, $i\ge1$,
and unitaries $v_j^\pm$ reducing the partition
$e_0=1-\sum_{i\ge1}e_i,e_1,\ldots,e_m$, such that
\[
 \sum_{i\ge1}\mathcal E_w(e_i)\le\xi/8,
 \qquad
 \tau(e_0)\le\tau(P_0)+\frac{16\xi}{\kappa'},
\]
and
\[
 \sum_{j,\pm}\|v_j^\pm-w_j\|_2^2
 \le2h\xi+8h\tau(e_0)
 \le2h\xi+8h\tau(P_0)+\frac{128h\xi}{\kappa'}.
\]
The doubled tuple has scalar gap $(\kappa')^2/256$ on every nonzero
$e_i$, $i\ge1$.
\end{lemma}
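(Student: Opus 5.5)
Inside each block $P_i$, $i\ge1$, I would run a Kun-style pruning, and then repair the unitaries on the small removed piece.

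\emph{Pruning.} Fix $i\ge1$. Starting from $q=0$, as long as some projection $q'\le P_i$ orthogonal to the current $q$ satisfies $\mathcal E_w(q')\le\kappa'\tau(q')/2$ and $\rank(q+q')\le\rank P_i/2$, replace $q$ by $q+q'$ and continue. Since ranks are integers, this stops; call the final projection $q_i$.
\begin{enumerate}[label=(\roman*)]
\item \emph{Energy of $q_i$.} The energy is subadditive up to cross terms. To control these, I would instead take a maximal projection $q\le P_i$ of rank at most $\rank P_i/2$ that minimizes $\mathcal E_w(q)-\frac{\kappa'}2\tau(q)$. This variational choice avoids the cross-term bookkeeping.
\item \emph{Size of $q_i$.} Hypothesis \eqref{eq:source-g1} gives $\kappa'\tau(q_i)-\xi_i\le\mathcal E_w(q_i)\le\frac{\kappa'}2\tau(q_i)$, so $\tau(q_i)\le2\xi_i/\kappa'$ and $\mathcal E_w(q_i)\le\xi_i$.
\end{enumerate}
Set $e_i=P_i-q_i$. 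By the invariance of $P_i$ we have $\mathcal E_w(e_i)=\mathcal E_w(q_i)$. Reaching the constants $\xi/8$ and $16\xi/\kappa'$ will require tuning the threshold (use $\kappa'\tau/16$-type thresholds rather than $1/2$). Then $\tau(e_0)\le\tau(P_0)+\sum_i\tau(q_i)$.

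\emph{Unitary repair.} For each $j$, compress $w_j$ to $e_iw_je_i$ and complete its polar part to a unitary on $e_iH$, exactly as in the proof of Proposition~\ref{pov:rounding} via \eqref{eq:source-p3}. This defines $v_j^+$ on $\bigoplus_{i\ge1} e_i$. The cost is $\|v_j^+e_i-w_je_i\|_2^2\le\|[w_j,e_i]\|_2^2$, which sums to about $4h$ times the energy. On $e_0$, put $v_j^+=v_j^-=1$, or any unitary of $e_0$, at cost $\le4\tau(e_0)$ each. For $v_j^-$, use the analogous completion built from $w_j^*$, or simply $v_j^-=(v_j^+)^*$ adjusted as needed; the doubled tuple $(v_j^\pm)$ is what gives a self-adjoint Markov operator. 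Summing the costs gives the stated bound $2h\xi+8h\tau(e_0)$.

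\emph{Scalar gap on $e_i$.} This is the main obstacle. I would go through a Cheeger-type inequality. Every projection $q\le e_i$ with $\rank q\le\rank e_i/2$ has boundary energy at least about $\frac{\kappa'}2\tau(q)$ with respect to $w$:
\begin{enumerate}[label=(\roman*)]
\item if $q$ had small energy, adjoining it to $q_i$ would contradict maximality;
\item the rank constraint survives because $\rank e_i\ge\rank P_i/2$;
\item the energy relative to $e_i$ differs from the energy relative to $P_i$ only by terms involving $[w,q_i]$, which are controlled via the minimizing choice of $q_i$ (an exchange argument).
\end{enumerate}
Passing from $w$ to the compressed tuple $v$ loses at most a factor 2, since compression only removes boundary with $q_i$.

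Projections are the only test objects here, so I still need to get from projections to all $x\in e_iM_de_i$. For self-adjoint $x$ with median-centred spectrum, apply the coarea identity in the integrated form of Lemma~\ref{pre:coarea} to $x_+^2$ and $x_-^2$: the level sets $1_{[s,\infty)}(x_\pm^2)$ have rank at most half. Cauchy--Schwarz then gives $\mathcal E_v(x)\ge c(\kappa')^2\|x-\text{median}\|_2^2$, and replacing the median by the mean only improves the bound. The square in $(\kappa')^2/256$ matches this Cheeger loss. Non-self-adjoint $x$ split into real and imaginary parts.

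The delicate point is the matrix Cheeger step: $[v,x^2]$ must be controlled by $\|[v,x]\|_2\|x\|_2$. This holds because the $v_j$ are unitary: $\|[v,x^2]\|_2\le2\|x\|\ldots$ is too crude. So I would work in an eigenbasis of $x$ with the entry weights $c_{k\ell}$ from Lemma~\ref{pre:coarea}, where $|\lambda_k^2-\lambda_\ell^2|=|\lambda_k-\lambda_\ell||\lambda_k+\lambda_\ell|$ makes the scalar Cheeger computation go through verbatim.
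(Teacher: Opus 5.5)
There are two genuine gaps.

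\textbf{The pruning step.} Your exchange computation is sound where it applies. For $q\le e_i$, put $X=(4h)^{-1}\sum_j(\|q_iw_jq\|_2^2+\|qw_jq_i\|_2^2)$. Then $\mathcal E_w(q_i+q)=\mathcal E_w(q_i)+\mathcal E_w(q)-2X$ and $\mathcal E_w(q)=\mathcal E_{w,e_i}(q)+X$. So whenever $q_i+q$ is a competitor, minimality gives $\mathcal E_{w,e_i}(q)\ge\frac{\kappa'}2\tau(q)$.

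Claim (ii), however, is false. From $\rank q\le\rank e_i/2$ you only get $\rank(q_i+q)\le(\rank P_i+\rank q_i)/2$. So for $q$ of nearly half rank, $q_i+q$ is not admissible and minimality says nothing.

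Behind this sits a missing step, not a constant: you never discard blocks. Suppose every $w_j$ acts as the identity on $P_i$ and $\xi_i=\kappa'\tau(P_i)/2$. The hypothesis holds, your $q_i$ is an arbitrary half-rank projection, and $e_i=P_i-q_i$ carries no gap whatsoever.

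The paper handles this as follows.
\begin{itemize}
\item It sends $P_i$ to $e_0$ whenever $\xi_i>\kappa'\tau(P_i)/16$; this is where the term $16\xi/\kappa'$ comes from.
\item On the remaining blocks it prunes relative to the current block $Q$, removing $p\le Q$ with $\tau(p)\le\tau(Q)/2$ and small compressed energy $\mathcal E_{w,Q}(p)$. The stopping rule then directly gives relative expansion of the final block.
\item A first-crossing argument at $\tau(P_i)/4$ keeps the removed part small.
\end{itemize}
Your variational version can also be repaired after such a discard. When $q_i+q$ is not admissible, apply the hypothesis to $e_i-q$. Its rank is below $\rank P_i/2$ and its energy equals $\mathcal E_w(q_i+q)$, which gives $\mathcal E_{w,e_i}(q)\ge\kappa'\tau(q)-2\xi_i\ge\frac23\kappa'\tau(q)$.

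\textbf{The passage to unitaries.} All the pruning gives you is a lower bound on the relative energy $(4h)^{-1}\sum_j\|[b_j,q]\|_2^2$, where $b_j=e_iw_je_i$. A polar completion of $b_j$ need not inherit this bound.

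Here is a concrete failure. Let $w_j$ rotate a unit vector $\psi\in e_iH$ partly into $(P_i-e_i)H$ and fix the rest of $e_iH$. Then $b_j$ is positive and invertible, so its polar part is $e_i$, which commutes with every $q$. Yet $[b_j,q]\ne0$ for a rank-one $q$ mixing $\psi$ with a vector orthogonal to it. For such $q$ the discrepancy $\|(e_i-|b_j|)q\|_2$ is comparable to $\tau(q)^{1/2}$, so your claimed factor-$2$ loss has no basis.

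Taking $v_j^-=(v_j^+)^*$ adds nothing either. The lazy Markov operator already contains both $\Ad v$ and $\Ad v^*$, so the doubling is not there for self-adjointness.

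The missing idea is the paper's choice $v^\pm_{j,e}=v(|b|\pm i(e-|b|^2)^{1/2})$, with $v$ a unitary completion of the polar part of $b=ew_je$. These are unitaries on $eH$ whose average is exactly $b$. Convexity of $y\mapsto\|[y,q]\|_2^2$ then gives $\frac12\sum_\pm\|[v^\pm_{j,e},q]\|_2^2\ge\|[b,q]\|_2^2$. This transfers the relative expansion to the doubled tuple at cost $\|v^\pm_{j,e}-b\|_2^2=\tau(e-|b|^2)=\|(1-e)w_je\|_2^2$.

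With that in place, your Cheeger step is a correct self-contained substitute for the paper's appeal to the quantum Cheeger inequality of Li--Qiao--Wigderson. In an eigenbasis of $x$ the weights $c_{k\ell}$ have row and column sums $1/(4d)$ and the level sets are diagonal, so the coarea argument for $x_\pm^2$ goes through and yields exactly the constant $(\kappa'/16)^2$.
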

\begin{proof}
Fix $P=P_i\ne0$. Discard $P$ if $\xi_i>\kappa'\tau(P)/16$.
Otherwise start with $Q=P$ and repeatedly remove a nonzero
$p\le Q$ with $\tau(p)\le\tau(Q)/2$ and
\[
 \mathcal E_{w,Q}(p):=(4h)^{-1}\sum_j\|[Qw_jQ,p]\|_2^2
 <(\kappa'/16)\tau(p).
\]
Ranks decrease without exhausting $Q$, so the process ends at $e\ne0$.
For an initial sum $F$ of removed projections, orthogonality of the
matrix entries joining each removed projection to the final complement gives
\begin{equation}\label{eq:source-g2}
 \mathcal E_w(F)=\mathcal E_w(P-F)
 \le\sum_k\mathcal E_{w,Q_k}(p_k)\le(\kappa'/16)\tau(F).
\end{equation}
At a first crossing of $\tau(P)/4$, one would have
$\tau(P)/4\le\tau(F)<5\tau(P)/8$. The smaller of $F,P-F$ then has
trace in $[\tau(P)/4,\tau(P)/2]$. Its energy is at least
$\kappa'\tau(P)/4-\xi_i\ge3\kappa'\tau(P)/16$ by \eqref{eq:source-g1},
but at most $5\kappa'\tau(P)/128$ by \eqref{eq:source-g2}, a contradiction.
Thus the final removed part has trace less than $\tau(P)/4$, and
\[
 \tau(P-e)\le2\xi_i/\kappa',\qquad \mathcal E_w(e)\le\xi_i/8.
\]
Thus a retained block contributes at most $2\xi_i/\kappa'$ to $e_0$,
whereas a wholly discarded block satisfies
$\tau(P_i)<16\xi_i/\kappa'$. Consequently,
\[
 \tau(e_0)=\tau(P_0)+\sum_{i\ge1}\tau(P_i-e_i)
 \le\tau(P_0)+\frac{16\xi}{\kappa'}.
\]
The energy bound follows by summing over the retained blocks.
Every $q\le e$ of rank at most half that of $e$ satisfies
$\mathcal E_{w,e}(q)\ge(\kappa'/16)\tau(q)$.

For each retained nonzero block $e=e_i$, $i\geq 1$, we perform the following construction. Complete the polar part of $b=ew_je$ to a unitary $v$ and set
$v_{j,e}^{\pm}=v(|b|\pm i(e-|b|^2)^{1/2})$.
These unitaries average to $b$; sum them over $e$ to define $v_j^\pm$.
The trace-preserving u.c.p. channel
\[
 \mathcal H_e=(4h)^{-1}\sum_{j,\pm}
       \bigl(\Ad v_{j,e}^{\pm}+\Ad((v_{j,e}^{\pm})^*)\bigr)
\]
satisfies, using the normalized trace on each retained corner, for every at most half-rank projection $q$,
\[
 \tau((e-q)\mathcal H_e(q))
 =(4h)^{-1}\sum_{j,\pm}\|[v_{j,e}^{\pm},q]\|_2^2
 \ge(\kappa'/8)\tau(q).
\]
The quantum Cheeger inequality~\cite[Theorem~1.9(2)]{LiQiaoWigderson}
gives gap at least $(\kappa')^2/128$ for $I-\mathcal H_e$. The energy in
\eqref{phy:scalar-gap} is half this form, giving $(\kappa')^2/256$; rank-one
blocks satisfy the inequality automatically. On $e_0$ we set
$v_j^{\pm}=e_0$.

Returning to the ambient trace, write
\[
 b_{j,i}=e_iw_je_i,
 \qquad
 C_j=\sum_{i\ge0}b_{j,i}.
\]
Orthogonality of the matrix corners gives
\begin{align*}
 \|v_j^\pm-w_j\|_2^2
 &=\|w_j-C_j\|_2^2
   +\sum_{i\ge1}\|v_{j,e_i}^\pm-b_{j,i}\|_2^2
   +\|e_0-b_{j,0}\|_2^2 \\
 &\le2\|w_j-C_j\|_2^2+4\tau(e_0).
\end{align*}
Moreover, because each $P_i$ reduces the original tuple,
\[
 \mathcal E_w(e_0)=\sum_{i\ge1}\mathcal E_w(e_i).
\]
Consequently,
\begin{align*}
 \sum_{j,\pm}\|v_j^\pm-w_j\|_2^2
 &\le8h\sum_{i\ge0}\mathcal E_w(e_i)+8h\tau(e_0)\\
 &=16h\sum_{i\ge1}\mathcal E_w(e_i)+8h\tau(e_0)\\
 &\le2h\xi+8h\tau(e_0)\\
 &\le2h\xi+8h\tau(P_0)+\frac{128h\xi}{\kappa'}.
\end{align*}
\end{proof}

We have now assembled all preparatory material for our spectral gap theorem.

\begin{theorem}[Theorem~\ref{main:correction}]\label{phy:decomposition}
Suppose the tuple $(u_1,\dots,u_h)$ has spectral gap in the ultraproduct. There are projection partitions
$(e_{0,n},\ldots,e_{m_n,n})$ and unitaries $u_{j,n}^{\pm}$ in the
original matrices such that
\begin{equation}\label{phy:vanishing-corner}
 [u_{j,n}^{\pm},e_{i,n}]=0,\qquad
 \sum_{j,\pm}\|u_{j,n}^{\pm}-u_{j,n}\|_2^2
 \longrightarrow_\omega0
\end{equation}
and on every nonzero $e_{i,n}$ the doubled tuple has scalar
gap at least $\kappa^2/2^{28}$.
\end{theorem}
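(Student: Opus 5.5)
The proof will follow Kun's construction–correction scheme coordinatewise, ending with Lemma~\ref{phy:pruning} applied with $\kappa'$ of order $\kappa$ (say $\kappa'=\kappa/2^{11}$, so that $(\kappa')^2/256\ge\kappa^2/2^{28}$ after the losses below). Fix $\eta>0$ small. It suffices to produce, for $\omega$-almost every $n$, a partition and a corrected tuple satisfying \eqref{phy:vanishing-corner} up to an error $O(\eta)$ with the stated gap. A diagonal choice $\eta_n\to_\omega0$, made as in Proposition~\ref{pre:expectation}, then gives the theorem.

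\emph{Step 1: greedy exhaustion by almost invariant projections.} Work under the projection $r_n$ of Theorem~\ref{pre:repair}. Inductively pick projections $f_1,f_2,\ldots$ with tiny energy $\mathcal E_n(f_i)\le\alpha_n\tau(p_i)$. Each $f_i$ is obtained by applying Theorem~\ref{pre:repair} to a projection $p_i\le r_n$ that is a low-energy ``non-expanding'' piece: $\mathcal E_n(p_i)\le\kappa\tau(p_i)/64$ and $\tau(p_i)$ is at most half of the currently uncovered part. We choose $p_i$ essentially orthogonal to the uncovered region measured by the resolvent of $S_{i-1}$.

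Continue while some such $p$ lies mostly where $S_{i-1}$ is small. Stop when either (a) the region $1_{[0,\eta]}(S_m)$ has trace $\le\eta$, or (b) no further low-energy piece exists there. In case (b) the uncovered region already expands, and we add it as a block. Since each $f_i$ has trace at least $\tfrac13\tau(p_i)$ and the spectral gap bounds overlaps, the total trace stays $\le3$. The energies sum to at most $\alpha_n\cdot O(1)$, which is below $e^{-\eta^{-4}}$ for $\omega$-almost all $n$. So the hypotheses of Proposition~\ref{phy:summary} hold with $e=r_n$ (absorbing $1-r_n$ into $S_0$).

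\emph{Step 2: orthogonalize and transport.} Proposition~\ref{phy:summary} yields partial isometries $V_i$ with orthogonal initial projections $P_i$, $\tau(P_0)=O(\eta)$, and total energy $O(\eta^2)$. Replace $u_{j,n}$ by a unitary $w_j$ reducing the partition $(P_0,P_i)$. Take $w_j$ to be the polar completion of $\sum_iP_iu_jP_i$ (plus anything on $P_0$). The cost is $\|w_j-u_j\|_2^2=O(\sum_i\mathcal E(P_i))=O(\eta^2)$ by \eqref{eq:source-p3}.

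\emph{Step 3: verify almost expansion \eqref{eq:source-g1}.} Let $q\le P_i$ with $\rank q\le\rank P_i/2$ have $\mathcal E_w(q)<\kappa'\tau(q)$. Transport it via $V_i$ to $V_iqV_i^*\le Q_i\le q_i$. Its energy with respect to $u$ exceeds that of $q$ by at most an error $\xi_i$ controlled by $\mathcal E_u(V_i)$ and $\|w-u\|$, and $\sum\xi_i=O(\eta)$. Then the uncovered half $f_i-\dots$ would have furnished a further low-energy piece in the greedy step, or would contradict the maximality/stopping criterion (using the weighted bound $\sum\tau(S_{i-1}^2Q_i)\le3\eta$ to see that it lies in fresh territory). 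This gives \eqref{eq:source-g1} with $\sum_i\xi_i=O(\eta)$. Lemma~\ref{phy:pruning} then produces the $e_i$ and doubled unitaries $v_j^\pm$ with $\sum\|v_j^\pm-w_j\|_2^2=O(\eta)$, hence $O(\eta)$-close to $u_j$, with scalar gap $(\kappa')^2/256$.

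\emph{Main obstacle.} Step 3 is where I expect the difficulty, and it dictates how Step 1 must be set up. A failure of expansion inside a block must be converted into a contradiction with the stopping rule, even though the block was built from nonorthogonal $f_i$ and then rotated by $V_i$. My first attempt would phrase the stopping rule via the quotient spectral gap \eqref{pre:gap}. Take a half-rank, low-energy $q$ inside a nearly invariant $f_i$ (note $f_i$ itself is not nearly invariant relative to its parts), and apply Theorem~\ref{pre:repair} to $q$. This gives an almost invariant $f$ of comparable trace, distinct from $f_i$. Its appearance should have been forced by maximality, provided the greedy rule always chooses $p_i$ of \emph{minimal} trace among admissible pieces, as Kun does. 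The delicate bookkeeping is that the repair of Theorem~\ref{pre:repair} only loses a factor of $\tfrac13$–$\tfrac53$ in trace, so the half-rank threshold must be compatible with these constants.
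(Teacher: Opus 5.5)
\textbf{Overall.} Your outline matches the paper's: greedy selection under $r_n$, repair by Theorem~\ref{pre:repair}, orthogonalization by Proposition~\ref{phy:summary}, transport by the $V_i$, then Lemma~\ref{phy:pruning}. However, two load-bearing steps are missing, and the final step and constant need fixing.

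\textbf{Trace bound and termination.} ``The spectral gap bounds overlaps'' is not a mechanism, and $\tau(f_i)\ge\frac13\tau(p_i)$ alone gives no upper bound on $\sum_i\tau(f_i)$. The paper chooses $p_i$ \emph{inside} the current low region, not orthogonal to it. Concretely, $b_i=1_{[0,\kappa/512]}(S_{i-1})$ is transported into $r$ by the polar part $w_i$ of $rb_i$, and $p_i\le w_iw_i^*$. Then \eqref{phy:corner-transfer} and $\|f_i-p_i\|_2^2\le\frac9{16}\tau(p_i)$ give $\tau(b_if_i)\ge\frac12\tau(f_i)$. Hence the bounded potential $\psi(S)=\tau(1-(1+4S)^{-1})$ gains at least $\frac38\tau(f_i)$ per step. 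This yields both termination and $\sum_i\tau(f_i)\le 8/3$, which you need for Proposition~\ref{phy:summary}.

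\textbf{Coverage.} Your case (b), ``add the uncovered region as a block'', does not work. That region is not reduced by the tuple, and if its boundary were small it would itself be admissible. Meanwhile the hypothesis $\tau(1-e)+\tau(1_{[0,\eta]}(S_m))\le\eta$ of Proposition~\ref{phy:summary} is never verified.

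The missing idea is that the stopping rule \emph{forces} coverage:
\begin{itemize}
\item Araki--Yamagami on the column operator of $(1-r,f_1,\ldots,f_m)$ gives $\mathcal E(S_m^{1/2})=O(\eta^4)$.
\item Lemma~\ref{pre:coarea} then gives $q=1_{[0,\theta]}(S_m)\le b_{m+1}$ with $\theta\in[\kappa/1024,\kappa/512]$ and $\mathcal E(q)=O(\eta^2)$.
\item Non-admissibility of $w_{m+1}qw_{m+1}^*$, together with \eqref{phy:corner-transfer}, gives $\mathcal E(q)\ge\kappa\tau(q)/256$.
\end{itemize}
Hence $\tau(q)=O(\eta^2)$, and case (b) implies case (a).

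\textbf{Step~3.} You eventually name the right ingredient, minimality. But the contradiction is with minimality \emph{at step $i$}, not with the final stopping rule, because $Q_i$ lies mostly in $b_i$, which is covered afterwards. It also needs no further repair of $q$; it is unclear what a new almost invariant $f$ would contradict.

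Take $p_i$ of minimum rank among admissible subprojections of the fixed projection $w_iw_i^*$. The phrase ``lies mostly where $S_{i-1}$ is small'' is too vague to compare against. Then $\rank P_i\le\rank f_i\le\frac53\rank p_i<2\rank p_i$. For half-rank $q\le P_i$, put $y=V_iqV_i^*$ and cut it to $\widehat y=1_{[1/2,1]}(b_iyb_i)$. After moving it by $w_i$, it has rank $<\rank p_i$ and hence is not admissible. The loss $\tau((1-b_i)Q_i)$, controlled by the weighted bound $\sum_i\tau(S_{i-1}^2Q_i)\le3\eta$, is absorbed into $\xi_i$.

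\textbf{Constant.} This bookkeeping gives $\kappa'=\kappa/2^{10}$. Your choice $\kappa'=\kappa/2^{11}$ would give $(\kappa')^2/256=\kappa^2/2^{30}$, which is smaller than the claimed $\kappa^2/2^{28}$.
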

\begin{proof}
We first fix notation for the control quantities. Let $(r_n,\alpha_n)$ be the data of Theorem~\ref{pre:repair}. For sufficiently small $\alpha_n$, put
\begin{equation}\label{phy:repair-scale}
 \eta_n=(\log(1/\alpha_n))^{-1/8},
\end{equation}
and put $\eta_n=1$ elsewhere. Then $\eta_n\to_\omega 0$, and
$\alpha_n\le\eta_n^4$, $9\alpha_n\le e^{-\eta_n^{-4}}$ and $\tau(1-r_n)\le \eta_n^4$ on an $\omega$-large set. We will suppress $n$ in what follows; all limits are taken along $\omega$.

\smallskip\noindent\textbf{Step 1: selecting small boundary projections and correcting them to almost-invariant ones.}
Start with $S_0=1-r$. At step $i$, put
$b_i=1_{[0,\kappa/512]}(S_{i-1})$ and let $w_i$ be the polar
part of $rb_i$.
Since $S_{i-1}\ge1-r$, we have
$b_i(1-r)b_i\le(\kappa/512)b_i$.
Thus $w_i^*w_i=b_i$, $w_iw_i^*\le r$, and
$b_iw_ib_i=(b_irb_i)^{1/2}$ gives, for every projection $p\le b_i$,
\begin{equation}\label{phy:corner-transfer}
 \|w_ipw_i^*-p\|_2^2\le\frac\kappa{256}\tau(p),\qquad
 \mathcal E(w_ipw_i^*)\le2\mathcal E(p)+\frac\kappa{128}\tau(p).
\end{equation}
Choose a nonzero $p_i\le w_iw_i^*$ of minimum rank satisfying
\begin{equation}\label{phy:admissible}
 \mathcal E(p_i)<\frac\kappa{64}\tau(p_i),
\end{equation}
and stop when none exists. Uniform improvement gives a projection $f_i$ with
\begin{equation}\label{phy:repair}
 \|f_i-p_i\|_2^2\le\tfrac9{16}\tau(p_i),\qquad
 \mathcal E(f_i)\le\eta^4\tau(p_i),\qquad
 \tfrac13\rank p_i\le\rank f_i\le\tfrac53\rank p_i.
\end{equation}
Put $S_i=S_{i-1}+f_i$ and $\widetilde p_i=w_i^*p_iw_i\le b_i$.
By \eqref{phy:corner-transfer}--\eqref{phy:repair},
\[
 \|f_i-\widetilde p_i\|_2^2
 \le\left(\tfrac34+\tfrac1{16}\right)^2\tau(p_i)
 <\tfrac34\tau(p_i).
\]
The projection-distance identity therefore gives
\[
 \tau(b_if_i)\ge\tau(\widetilde p_if_i)
 =\tfrac12\bigl(\tau(p_i)+\tau(f_i)
                  -\|f_i-\widetilde p_i\|_2^2\bigr)
 \ge\tfrac12\tau(f_i).
\]
To bound the total trace, set
$\psi(S)=\tau(1-(1+4S)^{-1})$ for $S\ge0$ and
$D_i=(1+4S_{i-1})^{-1}$. The resolvent identity gives
\[
 \begin{aligned}
 \psi(S_i)-\psi(S_{i-1})
 &=4\tau\!\left(D_if_i(1+4f_iD_if_i)^{-1}f_iD_i\right)\\
 &\ge\tfrac45\tau(f_iD_i^2)
 \ge\frac{4\tau(b_if_i)}{5(1+\kappa/128)^2}
 \ge\tfrac38\tau(f_i).
 \end{aligned}
\]
Here $0\le D_i\le1$ and
$D_i^2\ge(1+\kappa/128)^{-2}b_i$.
Since $0\le\psi\le1$ and each step increases it by at least
$\tau(p_i)/8\ge1/(8d)$, the construction terminates, with
\begin{equation}\label{phy:accounting}
 \sum_i\tau(p_i)\le8,\qquad
 \sum_i\tau(f_i)\le8/3,\qquad
 \sum_i\mathcal E(f_i)\le8\eta^4.
\end{equation}

At this point our projection family $\boldsymbol f$ has small boundary energy, but potentially bad coverage (it can be very non-orthogonal). In the next step, we correct that.

\smallskip\noindent\textbf{Step 2: controlling coverage and orthogonalizing.}
The input family $1-r,f_1,\ldots,f_m$ has total trace at most
three; its energy satisfies the bound arranged before the construction.
The Araki--Yamagami inequality, applied to its column operator
using the unitary completions in \eqref{eq:source-p3}, gives
\[
 \mathcal E(S_m^{1/2})
 \le2\left(\mathcal E(1-r)+\sum_i\mathcal E(f_i)\right)
 \le18\eta^4.
\]
Lemma~\ref{pre:coarea} supplies
$q=1_{[0,\theta]}(S_m)\le b_{m+1}$, with
$\theta\in[\kappa/1024,\kappa/512]$ and
$\mathcal E(q)=O(\eta^2)$.
Stopping and \eqref{phy:corner-transfer} give
$\mathcal E(q)\ge\kappa\tau(q)/256$, so $\tau(q)=O(\eta^2)$.
For sufficiently small $\eta$, we have $\eta<\kappa/1024$, hence
\[
 \tau(1-r)+\tau(1_{[0,\eta]}(S_m))=O(\eta^2)\le\eta.
\]
Theorem~\ref{pre:repair} and~\eqref{phy:accounting} also give the
stronger energy estimate
\[
 \mathcal E(1-r)+\sum_i\mathcal E(f_i)
 \le\tau(1-r)+\alpha\sum_i\tau(p_i)
 \le9\alpha
 \le e^{-\eta^{-4}}.
\]
Proposition~\ref{phy:summary} gives partial isometries $V_i$ with
orthogonal initial projections $P_i=V_i^*V_i$ and target projections
$Q_i=V_iV_i^*$ satisfying
\begin{equation}\label{phy:partial-isometries}
 \rank P_i\le\rank f_i<2\rank p_i,\qquad
 \sum_i\mathcal E(V_i)=O(\eta^2),\qquad
 \tau(P_0)=O(\eta),
\end{equation}
where $P_0=1-\sum_iP_i$.
Moreover $1-b_i\le(512/\kappa)^2S_{i-1}^2$, so
\begin{equation}\label{phy:range-defect}
 \sum_i\tau((1-b_i)Q_i)=O(\eta).
\end{equation}

\smallskip\noindent\textbf{Step 3: ensuring uniform almost-expansion beneath $P_i$.}
Since $\sum_{i\ge0}\mathcal E(P_i)\le8\sum_{i\ge1}\mathcal E(V_i)$,
pinching and polar completion give unitaries $w_j$ reducing this
partition, with $\sum_j\|w_j-u_j\|_2^2=O(\eta^2)$.
Put $\Delta_{j,i}^{\pm}=u_j^{\pm1}V_i-V_iw_j^{\pm1}$ and define
\begin{equation}\label{phy:additive-defect}
 \xi_i=\frac1{2h}\sum_{j,\pm}\|\Delta_{j,i}^{\pm}\|_2^2
             +2\tau((1-b_i)Q_i).
\end{equation}
The product rule, $\sum_iV_i^*V_i\le1$, and
\eqref{phy:range-defect} give $\sum_i\xi_i=O(\eta)$.

For a projection $q\le P_i$ of rank at most half of $\rank P_i$, put $y=V_iqV_i^*\le Q_i$ and
$\widehat y=1_{[1/2,1]}(b_i yb_i)$.
Then $\rank\widehat y\le\rank y<\rank p_i$ and
\[
 \|y-\widehat y\|_2^2\le2\tau((1-b_i)y),\qquad
 \tau(\widehat y)\ge\tau(y)-2\tau((1-b_i)y).
\]
Indeed, for $x=b_i yb_i$, scalar functional calculus gives
$2x+\widehat y-2x\widehat y\le\supp x$;
take traces and use $\tau(\widehat y y)=\tau(\widehat y x)$ and
$\rank x\le\rank y$.
Minimality applied to $w_i\widehat yw_i^*$, together with
\eqref{phy:corner-transfer}, gives
$\mathcal E(\widehat y)\ge\kappa\tau(\widehat y)/256$.
Since $\mathcal E(\widehat y)\le2\mathcal E(y)+2\|\widehat y-y\|_2^2$,
\[
 \mathcal E(y)\ge\frac\kappa{512}\tau(q)
                         -3\tau((1-b_i)Q_i).
\]
The identity
\[
 [u_j,y]=V_i[w_j,q]V_i^*
       +\Delta_{j,i}^+qV_i^*-V_iq(\Delta_{j,i}^-)^*
\]
now gives
\begin{equation}\label{phy:almost-expansion}
 \mathcal E_w(q)\ge\frac\kappa{2^{10}}\tau(q)-\xi_i,
 \qquad \sum_i\xi_i=O(\eta).
\end{equation}
Apply Lemma~\ref{phy:pruning} with $\kappa'=\kappa/2^{10}$.
The retained blocks have scalar gap
$(\kappa')^2/256=\kappa^2/2^{28}$. Restoring the coordinate index and
putting $\xi_n=\sum_i\xi_{i,n}$, the trace estimate in
Lemma~\ref{phy:pruning} gives
\[
 \tau(e_{0,n})
 \le\tau(P_{0,n})+\frac{16\xi_n}{\kappa'}
 =O(\eta_n).
\]
The corrected tuple is the identity on $e_{0,n}$. Decompose this
projection into orthogonal rank-one projections, add them to the retained
blocks, and relabel the resulting partition. This refinement does not
change the unitaries. On each new rank-one corner every element is scalar,
so the right-hand side of~\eqref{phy:scalar-gap} is zero. Thus every
nonzero block of the refined partition has scalar gap at least
$\kappa^2/2^{28}$.

If $v_{j,n}^{\pm}$ denotes the tuple supplied by
Lemma~\ref{phy:pruning}, its corrected edit estimate and the preceding
construction of $w_{j,n}$ give
\begin{align*}
 \sum_{j,\pm}\|v_{j,n}^{\pm}-u_{j,n}\|_2^2
 &\le4h\xi_n+16h\tau(e_{0,n})
       +4\sum_j\|w_{j,n}-u_{j,n}\|_2^2\\
 &=O(\eta_n)\longrightarrow_\omega0.
\end{align*}
Set $u_{j,n}^{\pm}=v_{j,n}^{\pm}$ on the $\omega$-large set where the
construction applies. On all remaining coordinates, take
$u_{j,n}^{\pm}=1$ and choose a rank-one projection partition. These
choices do not change the ultraproduct classes or the limit above and
make the assertion valid at every coordinate.
\end{proof}

\section{Scalar diagonal estimates and reconstruction}
\sectionmark{Scalar diagonal estimates and reconstruction}
\label{cp:section}

The purpose of this section is to construct finite-dimensional
subalgebras $A_n\subset M_{d_n}(\mathbb C)$ that realize the commutant
$C$ as an internal subalgebra of the tracial ultraproduct. The fact that a spectral gap condition implies internality had previously been used in \cite{PopaSpectralGap} in a different setting.

\subsection{Scalar diagonal lifts and a MASA}

We first show that a MASA of $C$ is internal.
In~\eqref{phy:scalar-lifts} and~\eqref{cp:diagonal-control}, the scalar
trace is identified with the map
\[
 x\longmapsto\tau_i(x)e_{i,n}.
\]

\begin{corollary}\label{phy:lifts}
For the partitions of Theorem~\ref{phy:decomposition}, put
$B_{i,n}=e_{i,n}M_{d_n}e_{i,n}$ and
$D_n=\bigoplus_i\mathbb Ce_{i,n}$, omitting zero corners.
There are trace-preserving u.c.p. maps $\Psi_n$ inducing $E_C$ which are
$D_n$-bimodular, self-adjoint on \(L^2(M_{d_n},\tau)\) and satisfy
\begin{equation}\label{phy:scalar-lifts}
 \max_i\|\Psi_n|_{B_{i,n}}-\tau_i\|_{2\to2}
 \longrightarrow_\omega0.
\end{equation}
Moreover $[D_n]_\omega$ is a MASA in $C$.
\end{corollary}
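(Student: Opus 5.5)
The plan is to take $\Psi_n$ to be a high power of the lazy Markov operator of the corrected tuple, pinched to the partition. Let $T_n^{\pm}$ denote the lazy Markov operator of the doubled tuple $(u^{\pm}_{j,n})_{j,\pm}$ from Theorem~\ref{phy:decomposition}. Since each $u^{\pm}_{j,n}$ commutes with every $e_{i,n}$, the operator $T_n^{\pm}$ is $D_n$-bimodular. It maps each corner $B_{i,n}$ into itself and is self-adjoint on $L^2$, trace-preserving and u.c.p. Define
\[
 \Psi_n=\bigl(T_n^{\pm}\bigr)^{k_n}\circ E_{\oplus_i B_{i,n}},
\]
where $E_{\oplus_i B_{i,n}}(x)=\sum_i e_{i,n}xe_{i,n}$ is the pinching. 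Both factors commute, are $D_n$-bimodular and self-adjoint, so $\Psi_n$ inherits all four properties.

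For \eqref{phy:scalar-lifts}, I will use the scalar gap $c=\kappa^2/2^{28}$ on each nonzero corner $B_{i,n}$. It says that $I-T_n^{\pm}$ restricted to $B_{i,n}\ominus\mathbb Ce_{i,n}$ is at least $2c$ (the factor comes from the normalization of \eqref{phy:scalar-gap} versus the energy form). Hence
\[
 \|(T_n^{\pm})^k|_{B_{i,n}}-\tau_i\|_{2\to2}\le(1-2c)^k
\]
uniformly in $i$ and $n$, and any $k_n\to_\omega\infty$ gives \eqref{phy:scalar-lifts}.

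The main obstacle is to show that $\Psi_n$ induces $E_C$. I will choose $k_n\le\ell_n$ so that $k_n\to\infty$ slowly enough that $k_n^2\sum_{j,\pm}\|u^{\pm}_{j,n}-u_{j,n}\|_2^2\to_\omega0$. Then $\|(T_n^{\pm})^{k_n}-T_n^{k_n}\|_{\infty\to2}\to_\omega0$ by telescoping, since each factor differs by $O\bigl((\sum\|u^\pm-u\|_2^2)^{1/2}\bigr)$ in $\infty\to2$ norm and everything is $\infty$-contractive. By Proposition~\ref{pre:expectation}, $T_n^{k_n}$ induces $E_C$.

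It then remains to absorb the pinching, that is, to show $\|(T_n^{\pm})^{k_n}(x-E_{\oplus B}(x))\|_2\to0$ uniformly for contractions $x$. Off-diagonal blocks $e_ixe_{i'}$ with $i\ne i'$ are invariant under $T^\pm$. I expect to show that they too decay, via the scalar gap on the doubled corner $e_i\oplus e_{i'}$ relative to the two-dimensional diagonal algebra. Alternatively, I will argue in the ultraproduct: $E_C$ annihilates $x-E_{\oplus B}(x)$. To see this, note that the $e_i$ asymptotically commute with the $u_j$, and $E_C$ of a sequence is the limit of $T^{k}$. So $\lim_\omega T_n^{k_n}$ already kills off-diagonal blocks modulo $\|\cdot\|_2$-small errors. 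If that route stalls, I would redefine $\Psi_n$ as $\sum_i\tau_i(e_ixe_i)e_i$ composed with the pinching. This map is visibly u.c.p., $D_n$-bimodular, self-adjoint and satisfies \eqref{phy:scalar-lifts} exactly, and I would prove that it induces $E_C$ by comparing it with $(T_n^{\pm})^{k_n}$ using the estimates above.

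Finally, for the MASA claim, $[D_n]_\omega\subset C$ because $\|[u_{j,n},e_{i,n}]\|_2$ is controlled by $\|u^\pm_{j,n}-u_{j,n}\|_2$ uniformly over sums of the $e_i$. It is abelian. For maximality, let $y\in C$ commute with $[D_n]_\omega$ and pick a representative $y_n$. Commuting with all sums of the $e_{i,n}$ forces $\|y_n-E_{\oplus B}(y_n)\|_2\to0$. Then $y=E_C(y)$ is induced by $\Psi_n(y_n)\in D_n$, using \eqref{phy:scalar-lifts}, so $y\in[D_n]_\omega$.
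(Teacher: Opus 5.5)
\textbf{There is a genuine gap: the pinching breaks the claim that $\Psi_n$ induces $E_C$.} The step you single out as the main obstacle is false in general.

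Write $\mathcal P_n(x)=\sum_ie_{i,n}xe_{i,n}$. By the scalar gap, $(T_n^{\pm})^{k_n}\mathcal P_n(x)$ lies within $(1-\kappa^2/2^{28})^{k_n}\|x\|_2$ of $\sum_i\tau_i(e_{i,n}xe_{i,n})e_{i,n}\in D_n$. So your $\Psi_n=(T_n^{\pm})^{k_n}\circ E_{\oplus_iB_{i,n}}$ induces the conditional expectation onto the abelian algebra $[D_n]_\omega$. If it also induced $E_C$, then $C=[D_n]_\omega$ would be abelian.

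But $C$ need not be abelian. For the trivial tuple $u_j=1$ the gap hypothesis holds vacuously and $C=\M$. If $(v_j)$ has spectral gap, so does $(v_j\otimes1_2)$, and then $C\supseteq1\otimes M_2$.

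The mechanism is that off-diagonal corners can carry exact fixed vectors of $T_n^{\pm}$. Suppose $W$ is a partial isometry with $W^*W=e_i$, $WW^*=e_{i'}$ and $u^{\pm}_{j}W=Wu^{\pm}_{j}$ for all $j,\pm$, i.e.\ the two compressed tuples are unitarily equivalent. Then $W\in e_{i'}M_de_i$ is fixed by $T_n^{\pm}$.

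The scalar gap constrains only the diagonal corners. There is no gap on $(e_i+e_{i'})M_d(e_i+e_{i'})$ relative to $\mathbb Ce_i\oplus\mathbb Ce_{i'}$. Asymptotic commutation of the $e_i$ with the $u_j$ only yields $E_C(e_ixe_{i'})=e_iE_C(x)e_{i'}$, not vanishing. These off-diagonal near-fixed vectors are exactly what Theorem~\ref{cp:rounding} later converts into matrix units. Your fallback $x\mapsto\sum_i\tau_i(e_ixe_i)e_i$ discards them for the same reason.

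\textbf{The repair is to omit the pinching, which is what the paper does.} The map $\Psi_n=(T_n^{\pm})^{k_n}$ is already $D_n$-bimodular, because every $u^{\pm}_{j,n}$ commutes with every $e_{i,n}$. Moreover, \eqref{phy:scalar-lifts} only concerns the restrictions to the diagonal corners $B_{i,n}$.

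Your telescoping comparison with $T_n^{k_n}$, for slowly growing $k_n\le\ell_n$, then works as written; the paper takes $k_n=N_n\approx\eta_n^{-1/4}$. Together with Proposition~\ref{pre:expectation}, it shows directly that $\Psi_n$ induces $E_C$, and nothing remains to be absorbed.

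Your MASA argument also works for this $\Psi_n$. Indeed, $\|\Psi_n(y_n-\mathcal P_n(y_n))\|_2\le\|y_n-\mathcal P_n(y_n)\|_2$, and \eqref{phy:scalar-lifts} places $\Psi_n(\mathcal P_n(y_n))$ near $D_n$. You do need to justify $\|y_n-\mathcal P_n(y_n)\|_2\to_\omega0$, for instance by choosing in each coordinate signs that maximize the left side of \eqref{cp:independent-signs}.

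Finally, a harmless slip: the scalar gap gives $I-T_n^{\pm}\ge\kappa^2/2^{28}$ on $B_{i,n}\ominus\mathbb Ce_{i,n}$, not twice that. The energy in \eqref{phy:scalar-gap} is already $\langle(I-T_n^{\pm})x,x\rangle$.
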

\begin{proof}
Let $Q_n$ be the lazy Markov operator of the doubled tuple
$(u^\pm_{j,n})_{j,\pm}$, defined as in
\eqref{pre:lazy-markov}, and let $\eta_n$ be
the sequence chosen in the proof of Theorem~\ref{phy:decomposition}.
Put $N_n=\max\{1,\lfloor\eta_n^{-1/4}\rfloor\}$.
Since $\ell_n^{-1}\le\alpha_n\le\eta_n^4$, we have
$N_n\le\ell_n$ on an $\omega$-large set, and
$N_n\to_\omega\infty$.
Proposition~\ref{pre:expectation} shows that $T_n^{N_n}$ induces $E_C$.
The edit bound and telescoping powers give
\[
 \|Q_n^{N_n}-T_n^{N_n}\|_{\infty\to2}
 \le N_n\|Q_n-T_n\|_{\infty\to2}
 =O(\eta_n^{1/4})\longrightarrow_\omega0.
\]
Define $\Psi_n=Q_n^{N_n}$. This map is a $D_n$-bimodular trace-preserving u.c.p. map; since \(Q_n\) on self-adjoint on \(L^2(M_{d_n},\tau)\), so is \(\Psi_n\). On every nonzero corner its
scalar error is at most
$(1-\kappa^2/2^{28})^{N_n}$ by Theorem~\ref{phy:decomposition}.
This proves \eqref{phy:scalar-lifts}.

For completeness, write $D=[D_n]_\omega$,
$\mathcal B_n=\bigoplus_iB_{i,n}$ and
$\mathcal P_n(x)=\sum_ie_{i,n}xe_{i,n}$.
For independent uniform signs $\varepsilon_i\in\{-1,1\}$, block
orthogonality gives the exact identity
\begin{equation}\label{cp:independent-signs}
 \mathbb E_\varepsilon
 \left\|\left[x,\sum_i\varepsilon_ie_{i,n}\right]\right\|_2^2
 =2\sum_{i\ne j}\|e_{i,n}xe_{j,n}\|_2^2
 =2\|x-\mathcal P_n(x)\|_2^2.
\end{equation}
If $(x_n)_\omega$ commutes with $D$, choose maximizing signs in
each coordinate. The resulting unitaries belong to $D_n$, so the
left side tends to zero and
$\|x_n-\mathcal P_n(x_n)\|_2\to_\omega0$.
The converse inclusion is immediate; hence
$D'\cap\M=[\mathcal B_n]_\omega$.
On this algebra \eqref{phy:scalar-lifts}, summed over the orthogonal
corners, gives $E_C=E_D$.
Unital bimodularity gives $D\subset C$, and consequently
$D'\cap C=D$. Thus $D$ is a MASA in $C$.
\end{proof}

\subsection{Reconstruction}

We now construct subalgebras $A_n\subset M_{d_n}$ satisfying
$[A_n]_\omega=C$.

\begin{theorem}\label{cp:rounding}
Let $N\subset\M$ be a unital von Neumann subalgebra.
Let $(e_{i,n})_i$ be a partition of $1$ into nonzero projections, and put
$B_{i,n}=e_{i,n}M_{d_n}e_{i,n}$,
$D_n=\bigoplus_i\mathbb Ce_{i,n}$ and $m_{i,n}=\rank e_{i,n}$.
Suppose $E_N$ is induced by trace-preserving u.c.p. maps $F_n$, self-adjoint
on $L^2(M_{d_n},\tau)$, which are $D_n$-bimodular and satisfy
\begin{equation}\label{cp:diagonal-control}
 \sigma_n:=\max_i\|F_{ii,n}-\tau_i\|_{2\to2}\to_\omega0,
 \qquad F_{ij,n}=F_n|_{e_{i,n}M_{d_n}e_{j,n}}.
\end{equation}
Then there are unital $*$-subalgebras $A_n\supset D_n$ such that
\[
 \|F_n-E_{A_n}\|_{\infty\to2}\to_\omega0,
 \qquad N=[A_n]_\omega,\qquad Z(N)=[Z(A_n)]_\omega.
\]
\end{theorem}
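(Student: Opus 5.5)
The plan is to read off from the off-diagonal pieces $F_{ij,n}$ which atoms must be joined by matrix units, and then build $A_n$ from these matrix units. First, $D_n$-bimodularity gives $F_n=\bigoplus_{i,j}F_{ij,n}$ on the block decomposition $M_{d_n}=\bigoplus e_{i,n}M_{d_n}e_{j,n}$. Self-adjointness on $L^2$ makes each $F_{ij,n}$ adjoint to $F_{ji,n}$ after applying $x\mapsto x^*$. Since $F_n$ induces the idempotent $E_N$, the maps $F_n^2-F_n$ vanish in $\infty\to2$ norm along $\omega$ (Lemma~\ref{pre:transfer}). The first step is to use the diagonal control \eqref{cp:diagonal-control} to show that each $F_{ij,n}$ is close, in $\infty\to 2$ norm uniformly over the block structure, to a map of rank at most one,
\[
 x\longmapsto \frac{\Tr(v_{ij}^*x)}{\sqrt{m_im_j}}\,\frac{v_{ij}}{\sqrt{m_im_j}}\cdot c_{ij},
\]
with $v_{ij}:e_jH\to e_iH$ a partial isometry-like operator. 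The reason is that for $x\in e_iMe_j$, Schwarz's inequality for the u.c.p. map $F_n$ gives $F(x)^*F(x)\le F(x^*x)$, and $x^*x\in B_j$ where $F$ is almost the scalar trace. Hence $F_{ij}(x)$ has $2$-norm controlled by $\tau_j(x^*x)$, and composing with the near-idempotence shows that $F_{ij}^*F_{ij}\approx F_{jj}$-related quantities. This forces the singular value distribution of $F_{ij}$ to concentrate on a single direction.

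The second step is to decide connectivity. Call $i\sim j$ if the top singular value of $F_{ij,n}$ (suitably normalized by the traces) exceeds a threshold $\theta_n\to0$ chosen slowly, via a diagonal argument as in Proposition~\ref{pre:expectation}. The near-idempotence $F^2\approx F$, restricted to the products $e_iMe_k\cdot e_kMe_j$ together with Schwarz/multiplicative-domain arguments, should show that the top singular vectors $w_{ij}$ are nearly multiplicative, $w_{ik}w_{kj}\approx w_{ij}$, and nearly unitary between blocks of comparable rank. In particular $\sim$ is approximately an equivalence relation and connected atoms have nearly equal rank. I then expect to round these approximate matrix units to exact matrix units $\{E^{(c)}_{ij}\}$ within each class $c$. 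To do this, fix a base atom per class and take polar parts of $w_{i i_0}$, correcting ranks by adding or deleting small corners as in Lemma~\ref{pov:selection}. Setting $A_n=\bigoplus_c \operatorname{span}\{E^{(c)}_{ij}\}$, we have $D_n\subset A_n$, and $E_{A_n}$ restricted to $e_iMe_j$ is exactly the rank-one map onto $\mathbb C E^{(c)}_{ij}$, or zero.

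The third step compares $F_n$ with $E_{A_n}$ blockwise and sums the errors. We need $\sum_{i,j}$ of the errors on random-sign contractions to tend to zero in $\infty\to2$; I would use a random-sign decoupling as in \eqref{cp:independent-signs} and Theorem~\ref{pre:repair} to pass from blockwise to global bounds. Proposition~\ref{pre:expectation}, through its last assertion, then gives $N=[A_n]_\omega$. For the centers, the central projections of $A_n$ are the class sums $\sum_{i\in c}e_i$. One shows that $[Z(A_n)]_\omega\subset Z(N)$, since these commute with $[A_n]_\omega$ up to vanishing errors. Conversely, an element of $Z(N)$ lies in $D'\cap N=D$ and commutes with the matrix units, so its diagonal coefficients are nearly constant on classes.

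The main obstacle is the second step: controlling the many small off-diagonal contributions uniformly, when there may be unboundedly many atoms and a threshold-based relation that is only approximately transitive. Aggregated weak links below the threshold could carry non-negligible mass. I would try to handle this by choosing $\theta$ randomly in a dyadic range and using a coarea-type averaging, in the spirit of Lemma~\ref{pre:coarea}, to make the total discarded mass small.
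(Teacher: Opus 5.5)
Your architecture is the paper's: read approximate matrix units off high eigendirections of the blocks $F_{ij,n}$, build exact matrix units from a base block in each class, and compare blockwise with $E_{A_n}$. However, the step that carries the proof is missing, and the obstacle you single out is not the real one.

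\textbf{The missing step: intermediate spectrum.} Your Step~1 asserts that near-idempotence forces every $F_{ij}$ to be close to a scaled rank-one map. This fails blockwise.
\begin{itemize}
\item The bound $\|F^2-F\|_{\infty\to2}\le\delta$ is normalized by $d$. It says nothing on a rectangle with $\max(m_i,m_j)/d\ll\delta^2$.
\item Such rectangles can carry intermediate spectrum of any multiplicity. On a corner $M_2(M_m)$ of vanishing trace, take $F=\frac12(E_1+E_2)$, where $E_k$ is the trace-preserving expectation onto $\Ad(1\oplus w_k)(M_2(\mathbb C)\otimes1)$ and $\tau(w_1^*w_2)=0$. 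Then $F_{11}=\tau_1$ and $F_{22}=\tau_2$ exactly, while $F_{12}$ has eigenvalue $\frac12$ with multiplicity two.
\item On such a block neither $0$ nor a rank-one projection approximates $F_{ij}$, whatever threshold (random or not) you use. Threshold relations can also fail to be transitive.
\end{itemize}
What you need is a proof that such blocks have negligible total trace. The paper obtains it from exactly the Schwarz inequality you mention. Combined with the diagonal bound, it gives the fourth-moment estimate \eqref{cp:fourth-moment}. Hence an eigenvector with $|\lambda|>\rho$ and $1-\lambda>\rho$ truncates to a contraction $x\in e_iM_de_j$ with $\|(F^2-F)x\|_{\mathrm{HS}}^2\ge M\rho^6/32$. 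Along a maximal matching of bad pairs, these contractions have orthogonal initial and final spaces. Their sum is therefore a single contraction with orthogonal defects, and $\delta$ bounds the total trace of the matched endpoints by $64\rho^2$. Removing those blocks gives the dichotomy \eqref{cp:spectral-dichotomy}. On the remaining blocks, related pairs have nearly equal ranks by \eqref{cp:fourth-moment}. Uniqueness of the high eigendirection and transitivity come from the Kraus seminorm $\mathcal E_F$ and \eqref{cp:product-bound}; this is the concrete content your ``multiplicative domain'' step would need.

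\textbf{Weak links are not the obstacle.} Accumulation of weak links is harmless, provided you measure blockwise errors in $2\to2$ rather than $\infty\to2$ norm. The latter is trivially small on small blocks and does not aggregate.
\begin{itemize}
\item By $D_n$-bimodularity and self-adjointness, $F_n$ is the orthogonal direct sum of the $F_{ij,n}$ over the mutually orthogonal subspaces $e_iM_de_j$ of $L^2$. The same holds for the expectation onto any algebra containing $D_n$.
\item Hence on the retained corner $p_nM_{d_n}p_n$ the $2\to2$ error is the maximum of the blockwise errors, not their sum, and it dominates the $\infty\to2$ error.
\item Blocks with $\|F_{ij}\|_{2\to2}\le\rho$ therefore cost at most $\rho$ altogether, and the discarded blocks cost $O(\tau(1-p_n)^{1/2})$.
\end{itemize}
Random signs, a coarea choice of threshold, and Theorem~\ref{pre:repair} (which concerns the spectral-gap Markov operator, absent here) play no role.

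\textbf{A smaller point.} Ranks within a class differ in general, so $\bigoplus_c\operatorname{span}\{E^{(c)}_{ij}\}$ does not contain $D_n$, and your description of $E_{A_n}$ on $e_iM_de_j$ fails. The paper takes isometries $U_i$ out of a least-rank block, so $W_{ij}=U_iU_j^*$ are exact matrix units with $q_i=W_{ii}\le e_i$. It then adjoins the summand $(1-q_n)M_{d_n}(1-q_n)$.
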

\begin{proof}
\textbf{Spectral analysis of rectangular blocks.}
By Lemma~\ref{pre:transfer} we get $\delta_n:=\|F_n^2-F_n\|_{\infty\to2}\to_\omega0$.
Choose $\rho_n=(\delta_n+\sigma_n+1/n)^{1/4}$ and suppress $n$; all constants below are
absolute.
For $v \in e_{i,n}M_{d_n}e_{j,n}$ with $F_{ij}v=\lambda v$, normalize $\|v\|_{\mathrm{HS}}^2=M$,
where $M=\max(m_i,m_j)$ and $t=\min(m_i,m_j)$.
Schwarz's inequality and the scalar diagonal bound give
\begin{equation}\label{cp:fourth-moment}
 (\lambda^2-\sigma)\Tr|v|^4\le M,\qquad
 \lambda^2-\sigma\le t/M.
\end{equation}
For the first inequality multiply
$\lambda^2v^*v=F_{ij}(v)^*F_{ij}(v)\le F_{jj}(v^*v)$, or its other-corner counterpart,
by $v^*v$ or $vv^*$ and take the trace on the larger corner.
The second follows from $\Tr|v|^4\ge M^2/t$.
If $|\lambda|>\rho$ and $1-\lambda>\rho$, truncate $v$ at
$R=2/(\lambda^2-\sigma)^{1/2}$. Then
$\|v-v_R\|_{\mathrm{HS}}^2\le M/4$ and
$\langle v,v_R\rangle\ge M/2$, so the contraction $x=v_R/R$ satisfies
\[
 \|(F_{ij}^2-F_{ij})x\|_{\mathrm{HS}}^2
 \ge M\lambda^4(1-\lambda)^2/32\ge M\rho^6/32.
\]
Join the pairs with such an eigenvalue and take a maximal matching. For each matched pair $(i,j)$, choose the contraction in
$e_iM_de_j$ constructed above. Since no index occurs in two matched
pairs, these contractions have mutually orthogonal initial spaces
and mutually orthogonal final spaces. Their sum therefore has
operator norm at most $1$.

Since $F$ preserves each rectangular block, the defects of the
chosen contractions are orthogonal in Hilbert--Schmidt norm.
Applying $\|F^2-F\|_{\infty\to2}=\delta$ to their sum and using
the preceding lower bound in each rectangle gives
\[
 \delta^2
 \ge \frac{\rho^6}{32d}
       \sum_{\text{matched }(i,j)}\max(m_i,m_j)
 \ge \frac{\rho^6}{64d}
       \sum_{\text{matched }(i,j)}(m_i+m_j).
\]
The matched endpoints therefore have total trace at most
$64\delta^2/\rho^6\le64\rho^2\to_\omega0$, where the last inequality
uses $\delta\le\rho^4$.

Remove these blocks and write $p$ for the
remaining block sum. There are no bad diagonal blocks by the scalar
diagonal bound; maximality now gives
\begin{equation}\label{cp:spectral-dichotomy}
 \tau(1-p_n)\to_\omega0,\qquad
 \operatorname{spec}(F_{ij})\subset[-\rho,\rho]\cup[1-\rho,1]
 \quad(i,j\text{ retained}).
\end{equation}

Call $i,j$ related when $F_{ij}$ has an eigenvalue in $[1-\rho,1]$.
For such an eigenvector, \eqref{cp:fourth-moment} gives
$t\ge(1-O(\rho))M$ and $\Tr|v|^4\le(1+O(\rho))M$.
Its polar part, completed to a rank-$t$ partial isometry $u_{ij}$, satisfies
\begin{equation}\label{cp:polar-nearfixed}
 \|v-u_{ij}\|_{\mathrm{HS}}=O(\sqrt\rho)\sqrt M,
 \qquad \|F_{ij}(u_{ij})-u_{ij}\|_{\mathrm{HS}}
       =O(\sqrt\rho)\sqrt M.
\end{equation}
Indeed, use $(s-1)^2\le(s^2-1)^2$ on the singular values, and then
contractivity of $F_{ij}$.

Products of these partial isometries are also nearly fixed. Indeed, for a Kraus
representation $F(x)=\sum_\ell a_\ell^*xa_\ell$, put
\[
 \mathcal E_F(x):=\|x\|_{\mathrm{HS}}^2-\operatorname{Re}\Tr(x^*F(x))
 =\tfrac12\sum_\ell\|[a_\ell,x]\|_{\mathrm{HS}}^2.
\]
We obtain that $\mathcal E_F^{1/2}$ is a seminorm. For a high eigenvector $v$ and
its polar completion $u=u_{ij}$, we have
\[
 \mathcal E_F(v)=(1-\lambda)M\le\rho M,\qquad
 \mathcal E_F(u)^{1/2}
 \le\mathcal E_F(v)^{1/2}+\sqrt2\|u-v\|_{\mathrm{HS}}
 =O(\sqrt\rho)\sqrt M.
\]
Also $\mathcal E_F(x^*)=\mathcal E_F(x)$.
For contractions $x,y$, the product rule and $(I-F)^2\le2(I-F)$ give
\begin{equation}\label{cp:product-bound}
 \|F(xy)-xy\|_{\mathrm{HS}}
 \le\sqrt2\bigl(\mathcal E_F(x)^{1/2}+\mathcal E_F(y)^{1/2}\bigr).
\end{equation}
There is at most one high eigendirection in a rectangle. Indeed, otherwise
orthogonal high eigenvectors give polar completions $u,z$ with
$|\Tr(u^*z)|=O(\sqrt\rho)M$ and
$\|u^*z\|_{\mathrm{HS}}^2\ge2t-M=(1-O(\rho))M$.
But \eqref{cp:product-bound} and the diagonal estimate imply
\[
 \|u^*z-\tau_j(u^*z)e_j\|_{\mathrm{HS}}
       =O(\sqrt\rho)\sqrt M,
\]
contradicting these two bounds. It follows from
\eqref{cp:spectral-dichotomy} and \eqref{cp:polar-nearfixed} that
\begin{equation}\label{cp:line-approximation}
 \|F_{ij}-P_{u_{ij}}\|_{2\to2}=O(\sqrt\rho)\quad(i\sim j),
 \qquad \|F_{ij}\|_{2\to2}\le\rho\quad(i\not\sim j),
\end{equation}
where $P_u$ is the Hilbert-space projection onto $\mathbb Cu$.
On diagonal blocks take $u_{ii}=e_i$.

The relation is reflexive by the scalar diagonal bound, and symmetric
by adjoint preservation. If $i\sim j\sim k$, the product
$u_{ij}u_{jk}$ has squared Hilbert--Schmidt norm at least
$\min(m_i,m_j)+\min(m_j,m_k)-m_j=(1-O(\rho))\max(m_i,m_j,m_k)$.
Its fixing error is $O(\sqrt\rho)\sqrt{\max(m_i,m_j,m_k)}$ by
\eqref{cp:product-bound}, which excludes $\|F_{ik}\|_{2\to2}\le\rho$.
Thus the relation is an equivalence relation.

\smallskip\noindent\textbf{Matrix units and the whole algebra.}
In each equivalence class $K$ choose a least-rank block $o$, of rank $t_K$,
and isometries $U_i=u_{io}:\mathbb C^{t_K}\to\mathbb C^{m_i}$,
with $U_o=I$. Put $q_i=U_iU_i^*$ and $W_{ij}=U_iU_j^*$.
These are exact matrix units, and $t_K/m_i=1-O(\rho)$ uniformly.
By \eqref{cp:product-bound},
$\|F_{ij}(W_{ij})-W_{ij}\|_{\mathrm{HS}}=O(\sqrt\rho)\sqrt{t_K}$.
Together with \eqref{cp:line-approximation}, this shows that the unit vector
$W_{ij}/\sqrt{t_K}$ is within $O(\sqrt\rho)$ of $\mathbb Cu_{ij}$.
Thus $\|F_{ij}-P_{W_{ij}}\|_{2\to2}=O(\sqrt\rho)$ for $i\ne j$;
on the diagonal,
\[
 \|F_{ii}-P_{q_i}\|_{2\to2}
 \le\sigma+\sqrt{1-t_K/m_i}=O(\sqrt\rho).
\]
Let $q=\sum_iq_i$ and let
$A^0=\bigoplus_K(M_{|K|}\otimes I_{t_K})\subset qM_dq$
be the algebra generated by these matrix units.
The rectangular restrictions of $G(x)=E_{A^0}(qxq)$ are precisely
$P_{W_{ij}}$ within a class and zero between classes. Orthogonality of
the rectangular Hilbert spaces therefore gives
\[
 \|F|_{pM_dp}-G|_{pM_dp}\|_{2\to2}=O(\sqrt\rho),
 \qquad \tau(1-q_n)\le\tau(1-p_n)+O(\rho_n)\to_\omega0.
\]
This estimate is independent of the number of blocks.
Put $A_n=A_n^0\oplus(1-q_n)M_{d_n}(1-q_n)$, which contains $D_n$.
For contractions $x$, $\|x-p_nxp_n\|_2\le\sqrt{2\tau(1-p_n)}$,
so contractivity gives
\[
 \|F_n-E_{A_n}\|_{\infty\to2}
 \le O(\sqrt{\rho_n})+\sqrt{2\tau(1-p_n)}
                       +\sqrt{\tau(1-q_n)}\to_\omega0.
\]
Equality of the induced expectations proves $N=[A_n]_\omega$.
Finally $Z(N)=[Z(A_n)]_\omega$ by Haar averaging over $U(A_n)$.
\end{proof}

\section{Deducing the main results}
% \sectionmark{Internality and nonhyperlinearity}
\label{fin:section}
The remaining main results now follow directly. The lifts of
Corollary~\ref{phy:lifts} satisfy the hypotheses of
Theorem~\ref{cp:rounding}. This proves Theorem~\ref{main:internality}.
In particular, this gives a positive solution to the centralizer problem
as formulated in~\cite{AlekseevThomCentralizers,ThomConditional}.

If $H<G$ is infranormal but not normal and both groups are
Kazhdan, then~\cite[Theorem~1.3]{ThomConditional} shows that
$G\ast_H G$ is not hyperlinear. This proves
Corollary~\ref{main:nonhyperlinear}. A concrete such pair is given by
\[
 H=\mathrm{EL}_3(\mathbb F_2[x_1,x_2,x_3]),\qquad
 G=\mathrm{EL}_3(\mathbb F_2[x_1^{\pm1},x_2^{\pm1},x_3^{\pm1}])
                      \rtimes\mathrm{SL}_3(\mathbb Z),
\]
where $\mathrm{SL}_3(\mathbb Z)$ acts by monomial substitutions; see
\cite[Theorem~E]{KunThomNonsofic}.

Finally, suppose that $H<G$ satisfies the assumptions of
Corollary~\ref{main:centralizer-normal} and that $G$ is hyperlinear.
Choose an embedding $\pi:G\to U(\M)$ into a tracial matrix
ultraproduct. By~\cite[Theorem~1.2]{ThomConditional}, the relative
commutant $\pi(H)'\cap\M$ is normalized by $\pi(G)$. Since $\pi$ is
injective,
\[
 \pi(C_G(H))=\pi(G)\cap\bigl(\pi(H)'\cap\M\bigr).
\]
The right-hand side is normal in $\pi(G)$, so $C_G(H)$ is normal in
$G$. This proves Corollary~\ref{main:centralizer-normal}.

\section*{Acknowledgments}

The second author was partially supported by the National Key R\&D
Program of China (Grant No.~2024YFA1014400). He thanks Leheng Chen,
Guoxiong Gao, Jiedong Jiang, Haocheng Ju, Shurui Liu, Zeming Sun,
Yuefeng Wang, Bin Wu, Liang Xiao, and Bin Dong from the Rethlas/Danus
team, as well as Ruochuan Liu and Gang Tian. He also thanks Mikael de
la Salle for comments on~\cite{LiuIdeas}.

The first and third authors thank Jesse Peterson for sharing his notes on spectral-gap
lifting and for emphasizing the difficulties of controlling
off-diagonal terms and preserving expansion during projection
improvement. They also thank Lewis Bowen and Konrad Wr\'obel for sharing
their work on noncommutative analogues of Kun's theorem. 
Finally, they thank Francesco Fournier-Facio for bringing the
manuscript~\cite{LiuIdeas} to their attention.

\end{document}